\def\R{\mathbb{R}}
\def\N{\mathbb{N}}
\def\C{\mathbb{C}}
\numberwithin{equation}{section}
\newtheorem{theorem}{Theorem}[section]
\newtheorem{lemma}[theorem]{Lemma}
\newtheorem{proposition}[theorem]{Proposition}
\newtheorem{corollary}[theorem]{Corollary}
\newtheorem{definition}[theorem]{Definition}
\newtheorem{remark}[theorem]{Remark}
\author{ Abdelwahab Bensouilah}
\address{
 Laboratoire Paul Painlev\'e (U.M.R. CNRS 8524), U.F.R. de Math\'ematiques, Universit\'e Lille 1, 59655 Villeneuve d'Ascq Cedex, France
}
\email{\sl ai.bensouilah@math.univ-lille1.fr}
\author[D. Draouil]{Dhouha Draouil}
\address{Universit\'e de Tunis El Manar, Facult\'e des Sciences de Tunis, D\'epartement de Math\'ematiques, Laboratoire \'equations aux d\'eriv\'ees partielles (LR03ES04), 2092 Tunis, Tunisie}
\email{\sl douhadraouil@yahoo.fr }
\author{Mohamed Majdoub}
\address{Imam Abdulrahman Bin Faisal University, College of Science, Mathematics Department, Dammam, KSA}
\email{\sl mmajdoub@iau.edu.sa}
\title[Weighted critical NLS]
{Energy critical Schr\"odinger equation with weighted exponential nonlinearity \bf{I}: \\ Local and global well-posedness}
\date{\today}
\begin{document}
\begin{abstract}
We investigate the initial value problem for a defocusing nonlinear Schr\"odinger
equation with weighted exponential nonlinearity
$$
i\partial_t u+\Delta
u=\frac{u}{|x|^b}\big(e^{\alpha|u|^2}-1\big); \qquad (t,x) \in \R\times\R^2,
$$
where $0< b <1$ and $\alpha=2\pi(2-b)$. We establish local and global well-posedness in the subcritical
and critical regimes.
\end{abstract}
\subjclass[2000]{35-xx, 35Q55, 35B60, 35B33, 37K07}

\keywords{Nonlinear Schr\"odinger equation,
energy critical, well-posedness}

\maketitle
\tableofcontents
\vspace{ -1\baselineskip}
\eject

\section{Introduction}

This is the first of a two-paper series in which we deal with the initial value problem for a nonlinear Schr\"odinger
equation with weighted exponential nonlinearity
\begin{equation}
\label{eq1}
\left\{
\begin{matrix}
i\partial_t u+\Delta u= \omega(x)g(u), & u:(-T_*, T^*) \times \R^2 \longmapsto \C \\
u(0) = u_0 \in H^1 (\R^2) \\
\end{matrix}
\right.
\end{equation}
where
\begin{equation}
\label{eq2}
g(u)=u\big({\rm e}^{(4-2b)\pi|u|^2}-1\big),
\end{equation}
and the singular weight $\omega$ is given by
\begin{equation}
\label{eq3}
\omega(x)=|x|^{-b},
\end{equation}
with $b>0$.

Solutions of (\ref{eq1}) formally satisfy the conservation of mass and Hamiltonian
\begin{equation} \label{M} M(u(t)):= \| u(t)\|_{L^2}^2=M(u_0),
\end{equation}

 \begin{eqnarray}
 \nonumber
 H(u(t)):&=&  \int_{\R^2}|\nabla u(t)|^2\,dx+ \frac{1}{(4-2b)\pi} \int_{\R^2}\left( e^{(4-2b)\pi|u(t)|^2} -1-(4-2b)\pi |u(t)|^2\right)\frac{dx}{|x|^b}\\
 \label{H}&=& H(u_0)
 \end{eqnarray}

Before going further, let us recall a few historic facts about this problem. We begin with the case of a power defocusing nonlinear Schr\"{o}dinger equation
\begin{eqnarray} \label{MHIS}i\partial_t u+\Delta u = u | u|^{p-1},\quad 1<p<\infty.
\end{eqnarray}
A solution $u$ to \eqref{MHIS} satisfies conservation of mass and Hamiltonian
\begin{eqnarray} H(u(t)):= \| \nabla u(t)\|_{L^2}^2+\frac{2}{p+1}\int_{\R^N}| u|^{p+1}(t,x)dx.
\end{eqnarray}

This case has been widely investigated and there is a large literature dealing with the local and global solvability in the scale of the Sobolev spaces $H^s(\R^N)$ (see \cite{Liu,RS} for $N\geqslant3$ and \cite{Wang} for $N=2$). The case of weighted nonlinear terms $$f(x,u)=\lambda| x|^{-a} u | u|^{p};\qquad p,a>0\quad \textit{and} \quad  \lambda=\pm1,$$ was studied  in \cite{Caz-B,  Farah, Genoud}. \\

Note that, if $u$ solves \eqref{MHIS} then, for $\lambda>0$, $u^{\lambda}$ defined by
$$ u^{\lambda} (t,x):=\lambda^{\frac{2}{1-p}}u(\lambda^{-2}t,\lambda^{-1}x)$$
also solves \eqref{MHIS}. Let $s_c=\frac{d}{2}-\frac{2}{p-1}$. Remark that the $L^2$- based homogeneous $\dot{H}^{s_c}$-Sobolev norm is invariant under the mapping $f(x)\mapsto \lambda^{-\frac{2}{p-1}}f(\lambda^{-1}x)$ for all $\lambda>0$. It is known that if $s>s_c$, then \eqref{MHIS} is locally well-posed in $H^s(\R^N)$, with existence interval depending only upon $\| u_0\|_{H^s}$. For $s=s_c$, \eqref{MHIS} is locally well-posed in $H^s$, with an existence interval depending upon $e^{it\Delta}u_0$. Finally, if $s<s_c$, then \eqref{MHIS} is ill-posed in $H^s$. Therefore, it is natural to refer to $H^{s_c}$ as the critical regularity for \eqref{MHIS}.

For the energy critical case, i.e., $s_c=1$, an iteration of the local-in-time well posedness theory fails to prove global well posedness since the local existence interval does not depend only on $\| u_0\|_{H^1}.$ By using new ideas of Bourgain \cite{Bou} along with a new interaction Morawetz inequality \cite{CGT}, the energy critical case of \eqref{MHIS} is now completely solved (see \cite{Visan}).\\

For $N=2$, the initial value problem associated to \eqref{MHIS} is energy subcritical for all $p>1.$ To identify an "energy critical" nonlinear
Schr\"odinger initial value problem on $\R^2$, it is thus natural to consider problems with exponential nonlinearities. The exponential type nonlinearities appear in several applications, as for example the self trapped beams in plasma (See \cite{LLT}).
From a mathematical point of view, Cazenave  \cite{Caz2D} considered the nonlinearity $f(u)=(1-e^{-|u|^2})u$ and showed global well-posedness and scattering. We stress here that for exponential nonlinearities behaving like ${\rm e}^{|u|^2}$ at infinity, the nonlinear interaction grows more rapidly than any power for large amplitude, and moreover, the higher derivatives grows even faster. This is the main difficulty for the exponential nonlinearity. The two dimensional case is particularly interesting because of its relation to the critical Sobolev (or Moser-Trudinger) embedding.

The 2D-NLS problem with exponential nonlinearities was studied by Nakamura and Ozawa in \cite{NO} for small Cauchy data. They proved global well-posedness and scattering. Recently, Colliander-Ibrahim-Majdoub-Masmoudi \cite{CIMM} considered the following 2D-NLS initial value problem
\begin{equation*}
\left\{
\begin{matrix}
i \partial_t u + \Delta u =  u\left({\rm e}^{4\pi| u|^2}-1\right), \\
u(0) = u_0 \in H^1(\R^2).
\end{matrix}
\right.
\end{equation*}
They obtained global well-posedness for both subcritical and critical regimes (i.e., $H(u_0)\leqslant1$), and some ill-posedness results in the supercritical case ($H(u_0)>1$). Later on, Ibrahim-Majdoub-Masmoudi-Nakanishi proved in \cite{IMMN} the scattering  when the cubic term is subtracted from the nonlinearity to avoid another critical exponent related to the decay property of solutions. The main ingredient is a new interaction Morawetz estimate, proved independently by Colliander et al. \cite{CGT} and Planchon and Vega \cite{PV}. This estimate gives a priori global bound on $u$ in the space $L_t^4(L_x^8).$ Hence, by complex interpolation they deduced that some of the Strichartz norms used in the nonlinear estimate go to zero for large times and the scattering in the subcritical case follows.\\

Our main goal here is to prove global well-posedness for the Cauchy problem \eqref{eq1} in the energy space. The fundamental tools  are the classical Strichartz estimates and a new Moser-Trudinger inequality with singular weights proved recently in \cite{Souza1, Souza2, Souza3}.
\subsection{Statement of the results}

We begin by  defining our notion of criticality and
well-posedness for \eqref{eq1}. We then give precise statements of
our main results.

\begin{definition}
\label{d1} The Cauchy problem  (\ref{eq1})  is
said to be {\it subcritical} if
$$
H(u_0)<1.
$$
It is {\it critical} if $H(u_0)=1$ and
{\it supercritical} if $H(u_0)>1$.
\end{definition}

\begin{definition}
\label{d2}
 We say that the Cauchy problem (\ref{eq1}) is {\it locally well-posed} in $H^1(\R^2)$ if there
exist $E>0$ and a time $T=T(E)>0$ such that for every $u_0\in
B_E:=\{\;u_0\in H^1(\R^2);\quad \|\nabla u_0\|_{L^2}<E\;\}$ there
exists a unique (distributional) solution $u:
[-T,T]\times\R^2\longrightarrow\C$ to (\ref{eq1}) which is in the
space ${\mathcal C}([-T,T]; H^1_x)$, and such that the solution
map $u_0\longmapsto u$ is uniformly continuous from $B_E$ to
${\mathcal C}([-T,T]; H^1_x)$.
\end{definition}

 We give now our main result in the following global well-posedness theorem.
\begin{theorem}\label{EG}
Assume that $0< b <1$. Let $u_0\in H^1(\R^2)$ such that $H(u_0)\leqslant1$. Then, the Cauchy problem \eqref{eq1}  has a unique global solution $u\in C(\R,H^1(\R^2))$. Moreover, $u\in L_{loc}^4(\R,C^{\frac{1}{2}}(\R^2))$ and satisfies the conservation laws \eqref{M} and \eqref{H}.
\end{theorem}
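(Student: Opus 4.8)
The plan is to construct solutions as fixed points of the Duhamel map for \eqref{eq1} in a Strichartz space carrying the $C^{1/2}$-regularity, and then to globalize using the conservation laws \eqref{M}--\eqref{H} together with the sharp weighted Moser--Trudinger inequality of \cite{Souza1, Souza2, Souza3}, along the lines of \cite{CIMM}. In dimension two the pairs $(4,4)$ and $(\infty,2)$ are Schr\"odinger admissible, so I would take as resolution space
$$
X_T:=C\bigl([-T,T];H^1(\R^2)\bigr)\cap L^4\bigl([-T,T];W^{1,4}(\R^2)\bigr),
$$
and note that $W^{1,4}(\R^2)\hookrightarrow C^{1/2}(\R^2)$, which is precisely what yields the stated $u\in L^4_{loc}(\R;C^{1/2})$ once $u$ is built. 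The homogeneous Strichartz estimate gives $\|e^{it\Delta}u_0\|_{X_T}\lesssim\|u_0\|_{H^1}$, and the inhomogeneous (dual) estimate controls the Duhamel term by the dual-Strichartz norms (e.g. $L^{4/3}_tW^{1,4/3}_x$ or $L^1_tH^1_x$) of $\omega g(u)$ and $\nabla(\omega g(u))$.

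The heart of the argument is the nonlinear estimate. With $g(u)=u(e^{\alpha|u|^2}-1)$ and $\alpha=(4-2b)\pi$ one has $|g(u)|\lesssim|u|^3e^{\alpha|u|^2}$, and differentiating $\omega g(u)=|x|^{-b}g(u)$ produces terms of the form $|x|^{-b-1}|u|^3e^{\alpha|u|^2}$ (absorbed by Hardy's inequality) and $|x|^{-b}|u|^2e^{\alpha|u|^2}|\nabla u|$. To handle the exponential at the sharp constant I would write $e^{\alpha|u|^2}=e^{\theta\alpha|u|^2}e^{(1-\theta)\alpha|u|^2}$ with $\theta\in(0,1)$, estimate $e^{\theta\alpha|u|^2}$ in $L^p_x(|x|^{-b}\,dx)$ by the singular Moser--Trudinger inequality --- which applies since $p\theta\alpha\|\nabla u\|_{L^2}^2\leqslant(4-2b)\pi$ whenever $p\theta\leqslant1$ and $\|\nabla u\|_{L^2}\leqslant1$ --- and absorb the remaining factor $e^{(1-\theta)\alpha|u|^2}$ via a Brezis--Gallouet type logarithmic inequality bounding $\|u(t)\|_{L^\infty}$ by $\|\nabla u(t)\|_{L^2}$ and $\|u(t)\|_{W^{1,4}}$ with only a logarithmic loss. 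Distributing the benign factors $|u|$ and $|\nabla u|$ by H\"older in space and time, and using that the time-integration supplies a positive power of $T$, one gets: for $\|\nabla u_0\|_{L^2}\leqslant1$ there exist $T>0$ and a ball of $X_T$ on which the Duhamel map is a contraction; running the same estimates on differences yields uniqueness and uniform continuity of the flow, hence local well-posedness in the sense of Definition \ref{d2}. I expect this step --- in particular making the Moser--Trudinger bound and the logarithmic bound cooperate uniformly up to the threshold $\|\nabla u\|_{L^2}=1$ --- to be the main technical obstacle.

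It remains to pass to global solutions. One first justifies, by mollification and the $X_T$-regularity above, that the solutions satisfy \eqref{M} and \eqref{H}. When $H(u_0)\leqslant1$, conservation of $H$ and positivity of the nonlinear term yield $\|\nabla u(t)\|_{L^2}^2\leqslant H(u(t))=H(u_0)\leqslant1$ on the whole interval of existence, while \eqref{M} bounds $\|u(t)\|_{L^2}$. Since the weighted Moser--Trudinger inequality is finite on the \emph{closed} unit ball, the local existence time in the previous step can be taken to depend only on these conserved quantities and not on the profile of the data, so iterating the local theory on consecutive intervals of fixed length extends $u$ to all of $\R$, covering the subcritical ($H(u_0)<1$) and critical ($H(u_0)=1$) regimes at once; collecting the $X_T$-bounds gives $u\in L^4_{loc}(\R;W^{1,4})\hookrightarrow L^4_{loc}(\R;C^{1/2})$ and uniqueness in $C(\R;H^1)$ follows from the local one. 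Should the nonlinear estimate instead degrade as $\|\nabla u\|_{L^2}\uparrow1$, the critical case can alternatively be reached by approximating $u_0$ with the subcritical data $(1-\tfrac1n)u_0$ --- for which $H\bigl((1-\tfrac1n)u_0\bigr)<H(u_0)=1$ because $s\mapsto e^s-1-s$ is increasing on $[0,\infty)$ --- and passing to the limit in the resulting global solutions by continuous dependence together with a bootstrap controlled by the conserved Hamiltonian.
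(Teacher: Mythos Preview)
Your local theory and the subcritical globalization are essentially what the paper does: fixed point in $C_TH^1\cap L^4_TW^{1,4}$ via Strichartz, the singular Moser--Trudinger inequality, and the log estimate, then iteration when $H(u_0)<1$ using that the lifespan depends only on the gap $1-\|\nabla u_0\|_{L^2}$, on $b$, and on the mass.

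The gap is the critical case $H(u_0)=1$. Your main argument asserts that since the Moser--Trudinger inequality is finite on the closed $H^1$ unit ball, the local time depends only on the conserved quantities. This is not so. In the contraction, after adding the perturbation $v$ to $v_0$ one has only $\|\nabla(v+v_0)\|_{L^2}\leqslant A<1$, and one then applies the \emph{open} inequality of \cite{Souza1} with exponent $\alpha A^2<2\pi(2-b)$; the room comes precisely from $1-\|\nabla u_0\|_{L^2}$. The closed-ball inequality of \cite{Souza2} requires $\|u\|_{H^1}\leqslant1$, which conservation does not give (mass is fixed but not small). So the lifespan genuinely degenerates as $\|\nabla u(t)\|_{L^2}\uparrow1$, and the paper shows (Proposition~\ref{limits}) that in the critical case, if $T^*<\infty$, then $\limsup_{t\to T^*}\|\nabla u(t)\|_{L^2}=1$; iteration therefore cannot close.

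Your fallback --- take $(1-\tfrac1n)u_0$, get global $u_n$, and pass to the limit by continuous dependence plus a bootstrap --- is circular. Continuous dependence only transports convergence on compact subintervals of the lifespan of the \emph{limit} solution; it cannot extend that lifespan. And the global bounds on $u_n$ coming from the subcritical argument depend on $1-H((1-\tfrac1n)u_0)\to0$, so there is no uniform control near $T^*$ to bootstrap with. What the paper supplies instead is a non-concentration argument: from $H(u_0)=1$ and $\|\nabla u(t_n)\|_{L^2}\to1$ one deduces $\displaystyle\int\frac{|u(t_n,x)|^4}{|x|^b}\,dx\to0$ (Proposition~\ref{limits}(ii)); combining this with the localized-mass inequality of Lemma~\ref{LG},
\[
\int_{B(S)}|u_0|^2\,dx\ \leqslant\ \int_{B(S+S')}|u(t_n)|^2\,dx+\frac{C(E)}{S'}t_n\ \lesssim\ \Bigl(\int\frac{|u(t_n)|^4}{|x|^b}\,dx\Bigr)^{1/2}+\frac{C(E)}{S'}T^*,
\]
and sending $n\to\infty$ then $S'\to\infty$ forces $u_0\equiv0$. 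This step is the missing ingredient in your proposal.
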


In a forthcoming paper \cite{BDM2}, we will show that every global solution
of \eqref{eq1} with $H(u)< 1$ approaches solutions to the
associated free equation
\begin{equation}
\label{SL}
 i\partial_t v + \Delta v=0,
\end{equation}
in the energy space $H^1$ as $t\to\pm\infty$. So far, we have
not succeeded to handle the critical case $H(u)=1$, so we have to restrict
ourselves to the sub-critical one. More precisely, we have the following forward in-time scattering result.

\begin{theorem}\label{SS}
Assume that $0< b <1$. Then, for any global solution $u$ of \eqref{eq1} in $C(\R,H^1(\R^2))$ satisfying $H(u)<1$, we have $u\in L^4(\R,C^{\frac{1}{2}}(\R^2))$ and there exists a unique free solution $u_{+}$ to \eqref{SL} such that
\begin{eqnarray*}  \ \ \|(u-u_{+})(t)\|_{H^1}\underset{t\rightarrow +\infty}{\longrightarrow}0.\ \
\end{eqnarray*}
\end{theorem}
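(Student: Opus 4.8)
The plan is to prove the scattering statement (Theorem~\ref{SS}) by a standard bootstrap/concentration argument adapted to the weighted exponential nonlinearity, exploiting the fact that in the subcritical regime $H(u)<1$ we have a uniform gap $H(u)\leqslant 1-\delta$ for some $\delta>0$. First I would establish the crucial \emph{a priori global space-time bound}
\begin{equation*}
\|u\|_{L^4(\R;L^8(\R^2))} + \|u\|_{L^4(\R;L^\infty(\R^2))} < \infty,
\end{equation*}
which in this two-dimensional setting comes from the interaction Morawetz estimate (Colliander et al.~\cite{CGT}, Planchon--Vega~\cite{PV}) giving the $L^4_tL^8_x$ control, combined with the energy bound $\|\nabla u(t)\|_{L^2}\lesssim 1$ and interpolation with higher Strichartz norms. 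The point is that once this global bound is in hand, one partitions $\R$ into finitely many intervals $I_j$ on which the relevant space-time norms of $u$ are small.

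Second, on each such interval I would run the Duhamel/Strichartz contraction that already underlies the local well-posedness half of the paper (Theorem~\ref{EG}), but now \emph{uniformly in the position of the interval}, to get a local-in-time smallness estimate; the weighted Moser--Trudinger inequality with singular weight from \cite{Souza1,Souza2,Souza3}, together with the subcritical gap $H(u)\leqslant 1-\delta$, is what controls the weighted exponential nonlinearity $\omega(x)g(u)=|x|^{-b}u(e^{(4-2b)\pi|u|^2}-1)$ and shows that the $L^4(\R;C^{1/2})$ norm is finite. This yields $u\in L^4(\R,C^{\frac{1}{2}}(\R^2))$ as claimed. The key nonlinear estimate will have the schematic form: for suitable admissible pairs,
\begin{equation*}
\big\| |x|^{-b} u(e^{(4-2b)\pi|u|^2}-1)\big\|_{N(I)} \lesssim \|u\|_{S(I)}^3\, \exp\!\big(C\|\nabla u\|_{L^\infty_t L^2}^2\big),
\end{equation*}
where the exponential factor stays bounded precisely because $(4-2b)\pi\|\nabla u\|_{L^2}^2 < (4-2b)\pi$ in the subcritical case, leaving room below the critical Moser--Trudinger threshold.

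Third, with the finite global Strichartz norm, scattering is routine: I would define $u_+ := u_0 + \int_0^{+\infty} e^{-is\Delta}\,\omega(\cdot)g(u(s))\,ds$, show the integral converges in $H^1$ using the nonlinear estimate over the tail $[T,+\infty)$, and then
\begin{equation*}
\|u(t) - e^{it\Delta}u_+\|_{H^1} = \Big\| \int_t^{+\infty} e^{i(t-s)\Delta}\omega(\cdot)g(u(s))\,ds\Big\|_{H^1} \lesssim \big\| \omega g(u)\big\|_{N([t,\infty))} \xrightarrow[t\to+\infty]{} 0,
\end{equation*}
by the dominated convergence of the tail of the finite global norm. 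Uniqueness of $u_+$ is immediate since $e^{it\Delta}$ is an isometry on $H^1$.

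The main obstacle I anticipate is the interplay between the singular weight $|x|^{-b}$ and the exponential growth in the nonlinear estimate: near $x=0$ one must absorb $|x|^{-b}$ either via a weighted Sobolev/Hardy-type inequality or via the weighted Moser--Trudinger inequality of \cite{Souza1,Souza2,Souza3}, while simultaneously keeping the exponential factor subcritical, and this balance is exactly why only $H(u)<1$ (not $H(u)=1$) is accessible --- the critical case leaves no room below the Moser--Trudinger threshold for the tail estimate to close, which is why the authors defer it. A secondary technical point is verifying that the interaction Morawetz estimate is genuinely available for this weighted, non-homogeneous nonlinearity, since the usual derivation uses the defocusing sign and monotonicity of $g$, which here must be checked for $\omega(x)g(u)$ with the extra factor $|x|^{-b}$; fortunately $|x|^{-b}>0$ and $g$ is monotone, so the Morawetz argument should go through with only cosmetic changes.
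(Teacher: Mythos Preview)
The paper does not actually prove Theorem~\ref{SS}: the result is explicitly deferred to the companion paper~\cite{BDM2}, and only the global well-posedness (Theorem~\ref{EG}) is carried out here. So there is no proof in the present text to compare your proposal against, beyond the strategy the authors signal in the introduction---namely, to follow~\cite{IMMN}: the interaction Morawetz estimate of~\cite{CGT,PV} supplies a global $L^4_tL^8_x$ bound, one then closes a Strichartz bootstrap on intervals where that norm is small, and scattering follows. Your outline is consistent with that route.

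Two points in your sketch would not close as written, however. First, the interaction Morawetz estimate yields only $u\in L^4_tL^8_x$; the membership $u\in L^4_tC^{1/2}_x$ is the \emph{output} of the bootstrap, not an input, so it should not be listed among the a~priori bounds coming from Morawetz. Second, your schematic nonlinear estimate
\[
\big\| |x|^{-b} u(e^{(4-2b)\pi|u|^2}-1)\big\|_{N(I)} \lesssim \|u\|_{S(I)}^3\, \exp\!\big(C\|\nabla u\|_{L^\infty_t L^2}^2\big)
\]
contains no small parameter: under $H(u)<1$ the exponential is just a fixed constant, and a purely superlinear bound in $\|u\|_{S(I)}$ does not give a contraction unless something else is small on the interval. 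The mechanism actually used (both in~\cite{IMMN} and in the local theory of Section~3 here) is the log estimate~\eqref{LE}, which converts $e^{\alpha\|u\|_{L^\infty_x}^2}$ into a \emph{power} $\|u\|_{C^{1/2}}^{\gamma}$ with $\gamma<4$; after H\"older in time this produces a factor $\|u\|_{L^4(I,L^8)}^{\theta}$ for some $\theta>0$, and it is this factor that one makes small by partitioning $\R$. With that correction your plan matches the intended argument, and your identification of the two genuine obstacles---checking that the Morawetz virial computation goes through with the extra positive factor $|x|^{-b}$, and balancing the singular weight against the Moser--Trudinger threshold via Theorem~\ref{MT} and Lemma~\ref{hardy}---is on target.
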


The paper is organized as follows. In Section 2, we present some useful tools from the literature that will be needed in our analysis. Section 3 is devoted to the proof of our main result about global well-posedness (Theorem \ref{EG}). We end the paper with an appendix in which we mainly focus on Schwarz symmetrization.\\

Here and below $C_T (X)$ denotes C([0, T ); X), $L_T^p (X)$ denotes $L^p([0, T );X)$ and $L^q$ denotes $L^q(\R^2)$.
If A and B are nonnegative quantities, we use $A \lesssim B$ to denote $A \leqslant CB$ for
some positive universal constant C. Finally, B(R) denotes the ball in $\R^2$ centered at the origin and with radius R.

\section{Basic facts and useful tools}
In this section we introduce the basic tools we will use all along this paper. We start with the following weighted Moser-Trudinger type inequality.

\begin{theorem}\cite{Souza1}\label{MT}
Let $0< b <2$ and $0<\alpha<2\pi(2-b)$. Then, there exists a positive constant $C=C(b,\alpha)$ such that
\begin{equation}
\label{WMT1}
\int_{\R^2}\frac{e^{\alpha| u(x)|^2}-1}{| x|^b}dx\leqslant C\int_{\R^2}\frac{| u(x)|^2}{| x|^b}dx,
\end{equation}
for all $u\in H^1(\R^2)$ with $\| \nabla u\|_{L^2(\R^2)}\leqslant 1$.
\end{theorem}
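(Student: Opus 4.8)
The plan is to expand the exponential and reduce \eqref{WMT1} to a weighted Gagliardo--Nirenberg inequality with a precise control on the constants. By density it is enough to take $u\in C_c^\infty(\R^2)$, the general case then following by Fatou's lemma. Since $|x|^{-b}$ is locally integrable for $b<2$, monotone convergence gives $\int_{\R^2}\frac{e^{\alpha|u|^2}-1}{|x|^b}\,dx=\sum_{k\geqslant1}\frac{\alpha^k}{k!}\int_{\R^2}\frac{|u|^{2k}}{|x|^b}\,dx$, and the $k=1$ term is exactly $\alpha\int_{\R^2}\frac{|u|^2}{|x|^b}\,dx$. So it suffices to establish, for every $k\geqslant2$, a bound of the form $\int_{\R^2}\frac{|u|^{2k}}{|x|^b}\,dx\leqslant \frac{P(k)\,k!}{\bigl(2\pi(2-b)\bigr)^{k}}\,\|\nabla u\|_{L^2}^{2(k-1)}\int_{\R^2}\frac{|u|^2}{|x|^b}\,dx$ for some fixed polynomial $P$; inserting this into the series and using $\|\nabla u\|_{L^2}\leqslant1$ leaves $\bigl(\sum_{k\geqslant2}P(k)\bigl(\tfrac{\alpha}{2\pi(2-b)}\bigr)^{k}\bigr)\int_{\R^2}\frac{|u|^2}{|x|^b}\,dx$, and the series converges precisely because $\alpha<2\pi(2-b)$.

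To see why the threshold $2\pi(2-b)$ is the right one, and how to prove the weighted Gagliardo--Nirenberg bound, I would first treat radial $u$ by the substitution $s=|x|^{(2-b)/2}$. Setting $w(s):=u\bigl(s^{2/(2-b)}\bigr)$ and $\widetilde w(y):=w(|y|)$ on $\R^2$, a direct computation gives $\int_{\R^2}\frac{|u|^{m}}{|x|^b}\,dx=\tfrac{2}{2-b}\int_{\R^2}|\widetilde w|^{m}\,dy$ for any $m>0$ and $\int_{\R^2}|\nabla u|^2\,dx=\tfrac{2-b}{2}\int_{\R^2}|\nabla\widetilde w|^2\,dy$. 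Hence the weighted inequality for $u$ is equivalent to the ordinary $2$D Moser--Trudinger inequality for $\widetilde w$ with $\alpha$ replaced by $\tfrac{2\alpha}{2-b}$, and $\alpha<2\pi(2-b)$ is exactly the Adachi--Tanaka condition $\tfrac{2\alpha}{2-b}<4\pi$; the required bound on $\int_{\R^2}\frac{|u|^{2k}}{|x|^b}\,dx$ then follows from the sharp classical $2$D interpolation inequality $\|\widetilde w\|_{L^{2k}}^{2k}\lesssim P(k)\,k!\,(4\pi)^{-k}\|\nabla\widetilde w\|_{L^2}^{2(k-1)}\|\widetilde w\|_{L^2}^{2}$.

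The main obstacle is passing from radial to general $u$. Unlike the unweighted case, Schwarz symmetrization does not help here: it lowers $\|\nabla u\|_{L^2}$ (P\'olya--Szeg\H{o}) but simultaneously \emph{raises} $\int_{\R^2}\frac{|u|^2}{|x|^b}\,dx$ by the Hardy--Littlewood inequality, so one cannot reduce to the radial case through rearrangement. The weighted interpolation inequality must therefore be proved directly for arbitrary $u\in H^1(\R^2)$ --- for instance by splitting $\R^2$ into $B(1)$ and $\R^2\setminus B(1)$, using the sharp $2$D Sobolev/Gagliardo--Nirenberg inequalities on each piece and controlling the behaviour near the origin with a weighted Hardy-type estimate, or by invoking a singular Trudinger--Moser inequality of Adimurthi--Sandeep type for the weight $|x|^{-b}$.

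An equivalent, perhaps cleaner, route bypasses the series expansion. On $\{|u|\leqslant1\}$ the pointwise bound $e^{\alpha|u|^2}-1\leqslant(e^{\alpha}-1)|u|^2$ gives directly $\int_{\{|u|\leqslant1\}}\frac{e^{\alpha|u|^2}-1}{|x|^b}\,dx\leqslant(e^{\alpha}-1)\int_{\R^2}\frac{|u|^2}{|x|^b}\,dx$, while on $\{|u|>1\}$ one applies H\"older's inequality with $p=\tfrac{2\pi(2-b)}{\alpha}>1$ to obtain $\int_{\{|u|>1\}}\frac{e^{\alpha|u|^2}-1}{|x|^b}\,dx\leqslant\Bigl(\int_{\{|u|>1\}}\frac{e^{2\pi(2-b)|u|^2}}{|x|^b}\,dx\Bigr)^{1/p}\Bigl(\int_{\{|u|>1\}}\frac{dx}{|x|^b}\Bigr)^{1/p'}$, and then controls the first factor by the critical (endpoint) weighted Trudinger--Moser--Ruf inequality $\int_{\R^2}\frac{e^{2\pi(2-b)|u|^2}-1-2\pi(2-b)|u|^2}{|x|^b}\,dx\lesssim\int_{\R^2}\frac{|u|^2}{|x|^b}\,dx$ (using $|u|^2>1$ on the set to absorb the subtracted terms) and the second by the weighted Chebyshev bound $\int_{\{|u|>1\}}\frac{dx}{|x|^b}\leqslant\int_{\R^2}\frac{|u|^2}{|x|^b}\,dx$. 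Either way the decisive point --- and the hardest part --- is establishing the sharp constant in the weighted interpolation inequality, equivalently the endpoint weighted Trudinger--Moser inequality, without the rearrangement shortcut available when $b=0$.
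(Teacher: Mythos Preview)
The paper does not give a proof of this statement: Theorem~\ref{MT} is quoted verbatim from \cite{Souza1} and used as a black box, so there is no ``paper's own proof'' to compare against. Your sketch is nonetheless worth examining on its merits.

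The series expansion together with the change of variable $s=r^{(2-b)/2}$ on radial functions is exactly the right mechanism and correctly explains why $2\pi(2-b)$ is the threshold: it turns the weighted problem into the classical Adachi--Tanaka inequality at exponent $4\pi$. You are also right that Schwarz symmetrization does not close the argument for \eqref{WMT1} as stated, because Hardy--Littlewood moves the right-hand side $\int|u|^2|x|^{-b}\,dx$ in the wrong direction; this is a genuine obstruction that distinguishes the weighted-RHS form from the Adachi--Tanaka form with $\|u\|_{L^2}^2$ on the right.

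There is, however, a real error in your ``cleaner route.'' The endpoint inequality you invoke,
\[
\int_{\R^2}\frac{e^{2\pi(2-b)|u|^2}-1-2\pi(2-b)|u|^2}{|x|^b}\,dx\ \lesssim\ \int_{\R^2}\frac{|u|^2}{|x|^b}\,dx
\qquad(\|\nabla u\|_{L^2}\le1),
\]
is false. Take the radial Moser sequence $u_n$ supported in $B(1)$ with $\|\nabla u_n\|_{L^2}=1$: on $B(1/n)$ one has $|u_n|^2=(2\pi)^{-1}\log n$, so the contribution to the left side from $B(1/n)$ equals
\[
\bigl(n^{2-b}-1-(2-b)\log n\bigr)\int_{B(1/n)}|x|^{-b}\,dx\ \longrightarrow\ \frac{2\pi}{2-b}>0,
\]
while $\int|u_n|^2|x|^{-b}\,dx\to0$. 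The endpoint result that \emph{is} available (Theorem~\ref{MTT}) requires the full constraint $\|u\|_{H^1}\le1$, not just $\|\nabla u\|_{L^2}\le1$, so it cannot be plugged into your H\"older splitting on $\{|u|>1\}$.

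Thus the gap is precisely what you yourself flagged as ``the hardest part'': a proof, for general non-radial $u$, of the weighted Gagliardo--Nirenberg (Caffarelli--Kohn--Nirenberg) inequality
\[
\int_{\R^2}\frac{|u|^{2k}}{|x|^b}\,dx\ \le\ \frac{P(k)\,k!}{\bigl(2\pi(2-b)\bigr)^{k}}\,\|\nabla u\|_{L^2}^{2(k-1)}\int_{\R^2}\frac{|u|^2}{|x|^b}\,dx
\]
with the correct growth of the constant in $k$. Neither of your suggested shortcuts --- a domain splitting with an unspecified ``weighted Hardy-type estimate,'' or an appeal to an Adimurthi--Sandeep-type inequality --- is fleshed out enough to be checkable, and the second route is actually broken. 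For a complete argument you should consult \cite{Souza1} directly.
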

We point out that $\alpha=2\pi(2-b)$ becomes admissible in
(\ref{WMT1}) if we require $\|u\|_{H^1(\R^2)}\leq1$ instead of
$\|\nabla u\|_{L^2(\R^2)}\leq1$. More Precisely, we have
\begin{theorem}\cite{Souza2}\label{MTT}
Let $0< b <2$. We have
\begin{equation}
\label{WMT2}
\sup_{\| u\|_{H^1(\R^2)}\leqslant 1}\int_{\R^2}\frac{e^{\alpha| u(x)|^2}-1}{| x|^b}dx<\infty\quad\mbox{if and only if}\quad \alpha\leq 2\pi(2-b).
\end{equation}
\end{theorem}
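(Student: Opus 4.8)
\medskip

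\noindent\textbf{Proof strategy.}
I would prove the two implications separately, reducing the ``$\Leftarrow$'' direction to finiteness of the supremum at the endpoint $\alpha=2\pi(2-b)$, since $e^{\alpha t}-1\leqslant e^{\alpha' t}-1$ for all $t\geqslant0$ when $0\leqslant\alpha\leqslant\alpha'$ (for $\alpha<2\pi(2-b)$ the bound is in fact immediate from Theorem \ref{MT} together with the weighted embedding $\int_{\R^2}|x|^{-b}|u|^2\,dx\lesssim\|u\|_{H^1}^2$). For the necessity I test against the Moser family concentrating at the origin: with $m_\eps$ equal to $(2\pi)^{-1/2}\sqrt{\log(1/\eps)}$ on $\{|x|\leqslant\eps\}$, to $(2\pi)^{-1/2}\log(1/|x|)/\sqrt{\log(1/\eps)}$ on $\{\eps\leqslant|x|\leqslant1\}$, and to $0$ elsewhere, one has $\|\nabla m_\eps\|_{L^2}=1$ and $\|m_\eps\|_{L^2}^2=o(1)$, so the renormalized $\widetilde m_\eps=m_\eps/\|m_\eps\|_{H^1}$ lie on the unit $H^1$-sphere and satisfy $\widetilde m_\eps^2=(1+o(1))\frac{\log(1/\eps)}{2\pi}$ on $\{|x|\leqslant\eps\}$; a direct computation then gives $\int_{|x|\leqslant\eps}|x|^{-b}(e^{\alpha\widetilde m_\eps^2}-1)\,dx\gtrsim\eps^{\,2-b-\frac{\alpha}{2\pi}+o(1)}$, which diverges as $\eps\to0$ precisely when $\alpha>2\pi(2-b)$. (At $\alpha=2\pi(2-b)$ the exponent is $o(1)$, which is exactly why this value is borderline.)

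\medskip

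\noindent\textbf{Sufficiency at the endpoint.}
Here I would follow Ruf's scheme for the unweighted $\R^2$ inequality, adapted to the singular weight. By Schwarz symmetrization (see the Appendix), replacing $u$ by its radial decreasing rearrangement does not increase $\|\nabla u\|_{L^2}$, preserves $\|u\|_{L^2}$, and, since $|x|^{-b}$ is radially nonincreasing, does not decrease $\int|x|^{-b}(e^{\alpha|u|^2}-1)\,dx$; so we may take $u=u(|x|)$ radial and nonincreasing with $\|u\|_{H^1}\leqslant1$. On $\{|x|\geqslant1\}$ the weight is $\leqslant1$ and, because $u$ is radial decreasing and $\|u\|_{L^2}\leqslant1$, $u(x)^2\leqslant u(1)^2\leqslant\pi^{-1}\|u\|_{L^2}^2\leqslant\pi^{-1}$; hence $e^{\alpha u^2}-1\lesssim u^2$ there and this region contributes $\lesssim\|u\|_{L^2}^2\leqslant1$. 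On $\{|x|\leqslant1\}$ I pass to the logarithmic variable $s=\log(1/|x|)\in[0,\infty)$ and set $\psi(s)=\sqrt{2\pi}\,u(e^{-s})$; a direct computation gives $\int_0^\infty|\psi'(s)|^2\,ds=\int_{|x|\leqslant1}|\nabla u|^2\,dx$, $\psi(0)^2=2\pi u(1)^2$, and --- using exactly $\alpha=2\pi(2-b)$ ---
\begin{equation*}
\int_{|x|\leqslant1}\frac{e^{\alpha|u|^2}-1}{|x|^b}\,dx
=2\pi\int_0^\infty\Big(e^{(2-b)(\psi(s)^2-s)}-e^{-(2-b)s}\Big)\,ds .
\end{equation*}
Since $\int_0^\infty e^{-(2-b)s}\,ds<\infty$, the whole matter comes down to the one-dimensional bound
\begin{equation*}
\sup\Big\{\ \int_0^\infty e^{(2-b)(\psi(s)^2-s)}\,ds\ :\ \psi(0)^2\leqslant2\|u\|_{L^2}^2,\ \ \int_0^\infty|\psi'|^2\,ds\leqslant1-\|u\|_{L^2}^2\ \Big\}<\infty .
\end{equation*}

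\medskip

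\noindent\textbf{The main obstacle.}
Writing $\mu^2=\|u\|_{L^2}^2$ and $\gamma=1-\mu^2$, the elementary inequality $\psi(s)^2\leqslant(1+\eta)\,s\!\int_0^s|\psi'|^2+(1+\eta^{-1})\psi(0)^2$ yields $\psi(s)^2-s\leqslant\big((1+\eta)\gamma-1\big)s+C$ with $(1+\eta)\gamma<1$ for $\eta$ small, whenever $\mu$ is bounded away from $0$; this makes the one-dimensional integral uniformly finite. The difficulty is the regime $\mu\to0$: the constant then degenerates because the coefficient of $s$ tends to $0$, and this degeneration cannot be repaired by elementary means. However, in that regime $\psi(0)\to0$ and $\int_0^\infty|\psi'|^2\to1$ --- that is, exactly the configuration of Moser's extremal functions --- and the uniform finiteness of $\int_0^\infty e^{(2-b)(\psi^2-s)}\,ds$ there is precisely the content of the Carleson--Chang sharp local estimate underlying Moser's bounded-domain inequality at the critical constant. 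This is the technical heart of the proof, and it is here --- together with the exterior estimate, where $\|u\|_{L^2}\leqslant1$ was used to bound $u$ pointwise on $\{|x|\geqslant1\}$ --- that the full hypothesis $\|u\|_{H^1}\leqslant1$, rather than merely $\|\nabla u\|_{L^2}\leqslant1$, is indispensable; indeed with only $\|\nabla u\|_{L^2}\leqslant1$ the supremum is infinite for every $\alpha>0$, as one sees by taking $u$ nearly constant on a large ball.
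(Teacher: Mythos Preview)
The paper does not prove Theorem~\ref{MTT}: it is quoted from \cite{Souza2} as a tool and no argument is supplied. There is therefore nothing in the paper to compare your proposal against.

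That said, your sketch is essentially the argument of de Souza--do~\`O (building on Ruf's unweighted result \cite{Ruf}), and it is correct in outline. The necessity via the Moser sequence is standard and your computation of the exponent $2-b-\alpha/(2\pi)+o(1)$ is right. For sufficiency, the symmetrization step, the exterior bound on $\{|x|\geqslant1\}$ using the pointwise decay $u(1)^2\leqslant\pi^{-1}\|u\|_{L^2}^2$, and the logarithmic change of variables reducing the interior to the one-dimensional functional $\int_0^\infty e^{(2-b)(\psi^2-s)}\,ds$ are all accurate. You have also correctly isolated where the real work lies: when $\mu=\|u\|_{L^2}\to0$ and $\int|\psi'|^2\to1$, the elementary bound $(1+\eta)\gamma-1<0$ degenerates, and one must invoke the Carleson--Chang sharp estimate (or an equivalent concentration-compactness/level-set argument as in \cite{Ruf,Souza2}) to close the uniform bound. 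Strictly speaking your proposal defers this step rather than carrying it out, so it is a proof \emph{strategy} rather than a complete proof; but the identification of the obstacle, and of why the full constraint $\|u\|_{H^1}\leqslant1$ is needed at the endpoint, is exactly right.
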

The following lemma will be very useful.
\begin{lemma}
\label{hardy}
Let  $0< b <2$ and $\gamma \geq 2$. Then, there exists a positive constant $C=C(b,\gamma)>0$ such that
\begin{equation}
\label{Hardy}
\int_{\R^2}\frac{|u(x)|^{\gamma}}{|x|^b} dx\leq\,C \|u\|_{H^1(\R^2)}^{\gamma},
\end{equation}
for all $u\in H^1(\R^2)$.
\end{lemma}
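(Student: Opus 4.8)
The plan is to prove the bound by splitting the domain of integration into the unit ball $B(1)$ and its complement, and to treat the two pieces by elementary arguments that both reduce to the Sobolev embedding $H^1(\R^2)\hookrightarrow L^q(\R^2)$, which holds for every finite $q\in[2,\infty)$.

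Away from the origin the weight is harmless. On $\R^2\setminus B(1)$ one has $|x|^{-b}\leq1$, hence
$$
\int_{\R^2\setminus B(1)}\frac{|u(x)|^{\gamma}}{|x|^b}\,dx\leq\int_{\R^2}|u(x)|^{\gamma}\,dx=\|u\|_{L^{\gamma}}^{\gamma}\lesssim\|u\|_{H^1}^{\gamma},
$$
since $\gamma\geq2$ is finite so that $H^1(\R^2)\hookrightarrow L^{\gamma}(\R^2)$.

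Near the origin I would use H\"older's inequality with a pair of conjugate exponents $(p,p')$ chosen so that the singular weight becomes locally integrable. Because $b<2$, the interval $(1,2/b)$ is non-empty; pick $p'\in(1,2/b)$ and set $p=p'/(p'-1)\in(1,\infty)$. Then $bp'<2$ gives $\int_{B(1)}|x|^{-bp'}\,dx=2\pi\int_0^1 r^{1-bp'}\,dr=\tfrac{2\pi}{2-bp'}<\infty$, and H\"older yields
$$
\int_{B(1)}\frac{|u(x)|^{\gamma}}{|x|^b}\,dx\leq\Big(\int_{B(1)}|u(x)|^{\gamma p}\,dx\Big)^{1/p}\Big(\int_{B(1)}|x|^{-bp'}\,dx\Big)^{1/p'}\lesssim\|u\|_{L^{\gamma p}}^{\gamma}.
$$
Since $\gamma p\in[2,\infty)$ is again finite, the Sobolev embedding gives $\|u\|_{L^{\gamma p}}\lesssim\|u\|_{H^1}$, and adding the two contributions finishes the proof, with $C$ depending only on $b$ and $\gamma$ (through the fixed choice of $p'$ and the corresponding Sobolev constant).

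The only point needing a little care — the ``hard part'', such as it is — is the compatibility of the two constraints on the H\"older exponent: $p'$ must be strictly larger than $1$ so that $\gamma p$ remains a finite, $H^1$-admissible Lebesgue exponent, yet strictly smaller than $2/b$ so that $|x|^{-bp'}$ is integrable near the origin. The hypothesis $b<2$ is precisely what makes the window $(1,2/b)$ non-empty, and this is where the restriction on $b$ enters. (Equivalently, one may note that $|x|^{-b}$ lies in $L^1(B(1))+L^{\infty}(\R^2\setminus B(1))$ for $0<b<2$, but the H\"older step above cannot be replaced by mere local integrability of the weight, since $|u|^{\gamma}$ need not be bounded near $0$.)
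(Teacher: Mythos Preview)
Your proof is correct and considerably more elementary than the paper's. Both proofs split the integral at $|x|=1$ and handle the exterior piece by the Sobolev embedding $H^1(\R^2)\hookrightarrow L^\gamma(\R^2)$. For the near-origin piece, however, the paper takes a much longer route: it first reduces to non-negative $u$ via the diamagnetic inequality, multiplies by a cutoff to obtain a function in $H^1_0$ of a ball, passes to the Schwarz symmetrization using the Hardy--Littlewood rearrangement inequality and the Polya--Szeg\H{o} theorem, and then applies the Strauss radial lemma together with the Poincar\'e inequality to bound the resulting radial integral. Your argument bypasses all of this machinery with a single H\"older step, exploiting only that $|x|^{-bp'}\in L^1(B(1))$ for some $p'>1$ (precisely because $b<2$) and that $H^1(\R^2)\hookrightarrow L^{\gamma p}(\R^2)$ for the finite conjugate exponent $\gamma p$. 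The symmetrization approach does illustrate techniques used elsewhere in the paper's appendix, but for the lemma itself your direct H\"older argument is both shorter and requires fewer tools.
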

\begin{remark}
Inequality \eqref{Hardy} fails for $b\geqslant2$. Indeed, let $u\in \mathcal{D}(\R^2)$ (the space of smooth compactly supported functions) be a radial function such that
$u(x)\equiv1$ for $| x| \leqslant 1$. Then, $u\in H^1(\R^2)$ and
$$\int_{\R^2}\frac{| u(x)|^\gamma}{| x|^b} dx\geqslant 2\pi \int_0^1\frac{rdr}{r^b}=+\infty.$$
\end{remark}
\begin{proof}[{Proof of Lemma \ref{hardy}}]
The diamagnetic inequality (see \cite{Tao} for a proof)
$$
\int_{\R^2} |\nabla |u||^2 \, dx \leqslant \int_{\R^2} |\nabla u|^2 \, dx,
$$
which holds true for all functions  $u\in H^1(\R^2)$, shows that it suffices to prove inequality \eqref{hardy} for $u\in H^1(\R^2)$ such that $u\geqslant0$ on $\R^2$.\\
Let $u$ be a non-negative $H^1$-function. Write
\begin{eqnarray*}
\int_{\R^2}\frac{| u(x)|^\gamma}{| x|^b} dx &=&\int_{\{| x| \leqslant1 \}}\frac{| u(x)|^\gamma}{| x|^b} dx+\int_{\{| x|>1\}} \frac{| u(x)|^\gamma}{| x|^b} dx\\&=& I+II.
\end{eqnarray*}
 The Sobolev embedding $H^1(\R^2)\hookrightarrow L^\gamma(\R^2)$ implies that
 \begin{equation}
 \label{First}
 II \leqslant \| u\|_{L^\gamma}^\gamma\leqslant C_\gamma\| u\|_{H^1}^\gamma.
 \end{equation}
Set $\Omega:=\{| x|<2\}$ and $v:\Omega \rightarrow \R$, \, $x \mapsto u(x) \phi(x)$, where $\phi$ is a $C^1_c(\Omega)$-function such that $\phi\equiv 1$ on $\{| x|\leqslant 1\}$, \, $0\leqslant \phi \leqslant 1$ and $\phi(x)= 0$ for $\frac{3}{2} < |x| <2$. We note that $v \in H^1_0(\Omega)$. Thus, the Polya-Szeg\"{o} theorem implies that $v^* \in H^1_0(\Omega^*)$. Here, $\psi^*$ stands for the Schwarz symmetrization of $\psi$.\\
We have
$$
I \leqslant\int_{\Omega}\frac{| v(x)|^\gamma}{| x|^b} dx=\int_{\Omega}\frac{ v(x)^\gamma}{| x|^b} dx,
$$
where in the last inequality we used the fact that $v$ is a non-negative function.\\
On the one hand, since $0<b<2$, one can find $1\leqslant q < \infty$ such that $0<b<\frac{2}{q'}$, so that $\omega \in L^{q'}(\Omega)$. On the other hand, by the Sobolev embedding, we know that $v^{\gamma} \in L^q(\Omega)$.  Hence, we can apply the Hardy-Littlewood inequality to get
$$
\int_{\Omega}\frac{ v(x)^\gamma}{| x|^b} dx \leqslant \int_{\Omega^*} (F(v))^*(x) \omega^*(x) dx,
$$
where $F$ denotes the function
$$
F(y):=y^{\gamma} \chi_{\R^{+}}(y), \qquad y\in \R.
$$
The fact that $F$ is non-decreasing gives
$$
(F(v))^*=F(v^*) \leqslant |v^*|^{\gamma}.
$$
A simple computation shows that
$$
\omega^{\#}(s)=\pi^{\frac{b}{2}} s^{-\frac{b}{2}}, \quad s \in ]0,4 \pi],
$$
where $\omega^{\#}$ stands for the decreasing rearrangement of $\omega$. Thus
$$
\omega^*(x):=\omega^{\#}(\pi |x|^2)=\omega(x), \quad \textit{a.e.} \quad x \in \Omega^*.
$$
Therefore,
$$
\int_{\Omega}\frac{ v(x)^\gamma}{| x|^b} dx \leqslant \int_{\Omega^*} \frac{| v^{*}(x)|^\gamma}{|x|^{b}} dx= 2 \pi\int_0^2 \frac{| v^{*}(r)|^\gamma}{r^{b-1}}dr.
$$
Extending $v^*$ by zero outside its domain, we can consider it as an element of $H^1_{\textit{rad}}(\R^2)$. The Strauss radial lemma (see appendix) and the Poincar\'e inequality imply then
$$
I\leqslant 2\pi (C_{p})^\gamma \| v^{*}\|_{H^1_0(\Omega^*)}^\gamma\int_0^2\frac{dr}{r^{\frac{2\gamma}{p+2}+b-1}}.
$$
Choose $2\leqslant p<\infty$ such that
$$\frac{2\gamma}{p+2}+b-1<1.$$
This is possible since $b<2$. Hence,  Polya-Szeg\"{o} theorem and the Poincar\'e inequality yield
$$I\leqslant C(p,\gamma,b) \| v\|_{H^1(\Omega)}^\gamma.$$
Since $\| v\|_{H^1(\Omega)} \lesssim \| u\|_{H^1(\R^2)}$, one obtains at final
\begin{equation}
\label{Second}
I \lesssim \| u\|_{H^1(\R^2)}^\gamma.
\end{equation}
Combining inequalities \eqref{First} and \eqref{Second} we deduce the desired result.
\end{proof}

In order to control solutions of $(1.1)$, we will use Strichartz estimates.
\begin{proposition}[\sf Strichartz estimates \cite{CIMM}]\quad\\
Let $v_0$ be a function in $H^1(\R^2)$
and $F \in L^1(\R,H^1(\R^2))$. Denote by v a solution to the inhomogeneous linear
Schr$\ddot{o}$dinger equation
$$i\partial_t v +\Delta v = F,$$
with initial data $v(0, x) = v_0(x)$.
Then, a constant C exists such that for any $T > 0$ and any admissible couple $(q,r),(\delta,\rho)$ we have
\begin{eqnarray}\label{ST} \| v\|_{L^q([0,T ],W^{1,r}(\R^2))}\leqslant C
\left[\| v_0\|_{H^1(\R^2)} +
\| F\|_{L^{\delta'}([0,T ],W^{1,{\rho'}})(\R^2))}\right].\end{eqnarray}
We recall that a couple (q,r) is said to be $Schr\ddot{o}dinger$ admissible, if $2\leqslant q,r\leqslant\infty,\ (q,r)\neq(2,\infty)$ and $\frac{1}{q}+\frac{1}{r}=\frac{1}{2}.$
\end{proposition}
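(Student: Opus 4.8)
The plan is to run the classical Strichartz argument---dispersive estimate, $TT^{*}$/duality, one-dimensional Hardy--Littlewood--Sobolev in the time variable, and the Christ--Kiselev lemma for the retarded term---and then to commute with one derivative to upgrade $L^{r}$ to $W^{1,r}$. Since we are in dimension two and the notion of admissibility above already excludes the single endpoint $(q,r)=(2,\infty)$, no genuinely endpoint (Keel--Tao) input is needed: every admissible pair other than $(\infty,2)$ satisfies $q,r\in(2,\infty)$, which is exactly the range reachable by the older Ginibre--Velo/Yajima scheme.

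First I would record, for the free propagator on $\R^{2}$, the pointwise dispersive bound $\|{\rm e}^{it\Delta}f\|_{L^{\infty}}\lesssim|t|^{-1}\|f\|_{L^{1}}$ together with the $L^{2}$ isometry $\|{\rm e}^{it\Delta}f\|_{L^{2}}=\|f\|_{L^{2}}$; Riesz--Thorin interpolation then gives $\|{\rm e}^{it\Delta}f\|_{L^{r}}\lesssim|t|^{-(1-2/r)}\|f\|_{L^{r'}}$ for $2\le r\le\infty$. For an admissible pair $(q,r)$ one has $1-2/r=2/q$, so a $TT^{*}$ argument reduces the homogeneous estimate $\|{\rm e}^{it\Delta}f\|_{L^{q}_{t}L^{r}_{x}}\lesssim\|f\|_{L^{2}}$ to boundedness of the bilinear form $(F,G)\mapsto\iint\langle{\rm e}^{i(t-s)\Delta}F(s),G(t)\rangle\,ds\,dt$, which follows from the dispersive bound and the Hardy--Littlewood--Sobolev inequality in the single variable $t$ (the exponent $2/q$ is precisely the one making HLS applicable, which forces $q>2$). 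Composing the homogeneous estimate for $(q,r)$ with the $L^{2}$-adjoint of the homogeneous estimate for $(\delta,\rho)$ then yields the non-retarded inhomogeneous bound
$$\Big\|\int_{\R}{\rm e}^{i(t-s)\Delta}F(s)\,ds\Big\|_{L^{q}_{t}L^{r}_{x}}\lesssim\|F\|_{L^{\delta'}_{t}L^{\rho'}_{x}}$$
for admissible $(q,r)$ and $(\delta,\rho)$.

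To replace the full integral $\int_{\R}$ by the retarded one $\int_{0}^{t}$ I would invoke the Christ--Kiselev lemma, which applies since $\delta'<q$: every admissible pair here has $q>2$ and $\delta>2$, hence $\delta'<2<q$. This gives $\big\|\int_{0}^{t}{\rm e}^{i(t-s)\Delta}F(s)\,ds\big\|_{L^{q}([0,T],L^{r})}\lesssim\|F\|_{L^{\delta'}([0,T],L^{\rho'})}$ with a constant independent of $T$ (extend $F$ by zero outside $[0,T]$, apply the global-in-time bound, then restrict). Duhamel's formula $v(t)={\rm e}^{it\Delta}v_{0}-i\int_{0}^{t}{\rm e}^{i(t-s)\Delta}F(s)\,ds$---well defined and $H^{1}$-continuous thanks to the hypothesis $F\in L^{1}(\R,H^{1})$, i.e.\ the pair $(\delta',\rho')=(1,2)$---then bounds $\|v\|_{L^{q}([0,T],L^{r})}$ by the right-hand side of \eqref{ST} without the derivatives.

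Finally, to recover the full $W^{1,r}$ statement I would use that ${\rm e}^{it\Delta}$ commutes with the Fourier multiplier $\langle\nabla\rangle=(1-\Delta)^{1/2}$: applying the scalar estimates just obtained to $\langle\nabla\rangle v$, which solves the same equation with data $\langle\nabla\rangle v_{0}$ and forcing $\langle\nabla\rangle F$, and combining with the $L^{r}$-estimate for $v$ itself, yields \eqref{ST}. The only genuinely delicate step is the passage from the non-retarded to the retarded inhomogeneous estimate with a forcing pair $(\delta,\rho)$ distinct from $(q,r)$; this is exactly what the Christ--Kiselev lemma handles, and everything else is interpolation, duality, and the scaling bookkeeping behind admissibility. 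Alternatively, one could simply quote the Keel--Tao theorem and observe that, in $\R^{2}$, all the admissible pairs appearing in the statement are strictly non-endpoint.
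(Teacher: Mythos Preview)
Your sketch is correct and is exactly the standard route to these estimates: dispersive decay in $\R^{2}$, $TT^{*}$ with one-dimensional Hardy--Littlewood--Sobolev, Christ--Kiselev for the retarded Duhamel term, and commutation with $\langle\nabla\rangle$ to pass from $L^{r}$ to $W^{1,r}$. The only cosmetic point is that the admissible pair $(q,r)=(\infty,2)$ is allowed and does not satisfy ``$q\in(2,\infty)$'', but for that pair the homogeneous estimate is just mass conservation and the inhomogeneous one is the energy/Duhamel bound, so no HLS or Christ--Kiselev is needed there; your argument covers the remaining pairs.

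There is nothing to compare with in the paper itself: the proposition is quoted from \cite{CIMM} as a tool and is not proved here. Your write-up supplies precisely the argument one would expect behind that citation (and, as you note, one could equally invoke Keel--Tao and observe that the two-dimensional endpoint $(2,\infty)$ is excluded by hypothesis).
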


Thanks to the next inequality, we will be able to control the exponential term.
\begin{proposition}[\sf Log Estimate \cite{PAMS}]\quad\\
Let $\beta\in ]0,1[.$ For any $\lambda > \frac{1}{2\pi\beta}$ and any
$0<\mu\leqslant 1$, a constant $C_{\lambda} > 0$ exists such that, for any function $u \in H^1(\R^2)\bigcap C^{\beta}(\R^2)$,
we have
\begin{eqnarray}\label{LE}\| u\|_{L^{\infty}(\R^2)}^2 \leqslant \lambda \| u\|_{H_{\mu}}^2 log\bigg(C_{\lambda}+\frac{8^{\beta}\mu^{-\beta}
\| u\|_{C^{\beta}(\R^2)}}{\| u\|_{H_{\mu}}}\bigg),\end{eqnarray}
where we set
$$\| u\|_{H_{\mu}}^2:= \|\nabla u\|_{L^2(\R^2)}^2 +\mu^2 \| u\|_{L^2(\R^2)}^2 .$$
Recall that $C^{\beta}(\R^2)$ denotes the space of $\beta$-$h\ddot{o}lder$ continuous functions endowed
with the norm
$$\| u\|_{C^{\beta}(\R^2)}:= \| u\|_{L^{\infty}(\R^2)}+ \sup_{x\neq y} \frac{| u(x)-u(y)|}{| x-y|^{\beta}}.$$
We note that $(q,r)=(4,4)$ is an admissible Strichartz couple and we have $$W^{1,4}(\R^2)\hookrightarrow C^{\frac{1}{2}}(\R^2).$$
\end{proposition}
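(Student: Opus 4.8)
The plan is to prove \eqref{LE} by a frequency splitting at a scale $N>0$ that will be optimized at the very end. I would fix once and for all a radial Schwartz function $\rho$ with $\int_{\R^2}\rho=1$, $|\widehat{\rho}|\leqslant 1$ and $\mathrm{supp}\,\widehat{\rho}\subset\{|\xi|\leqslant 1\}$, and set $\rho_N(x)=N^2\rho(Nx)$, so that $\widehat{\rho_N}(\xi)=\widehat{\rho}(\xi/N)$ is supported in $\{|\xi|\leqslant N\}$. Writing $u=u*\rho_N+(u-u*\rho_N)$ and using $\int\rho_N=1$, the two pieces are estimated separately: the mollified part $u*\rho_N$ is frequency localized in $\{|\xi|\leqslant N\}$ and is controlled by $\|u\|_{H_\mu}$ with a logarithmic loss in $N$, while the remainder $u-u*\rho_N$ is controlled by the Hölder regularity of $u$ through its modulus of continuity.

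For the low-frequency piece, since $\widehat{u*\rho_N}=\widehat{u}\,\widehat{\rho}(\cdot/N)$ is supported in $\{|\xi|\leqslant N\}$ and $|\widehat\rho|\leqslant 1$, Fourier inversion followed by the Cauchy--Schwarz inequality with the weight $\langle\xi\rangle_\mu^2:=|\xi|^2+\mu^2$ gives
\[
\|u*\rho_N\|_{L^\infty}\leqslant C\Big(\int_{|\xi|\leqslant N}\frac{d\xi}{|\xi|^2+\mu^2}\Big)^{1/2}\|u\|_{H_\mu}.
\]
The two-dimensional radial integral evaluates to $\pi\log\big(1+N^2/\mu^2\big)$, and keeping track of the Fourier normalization yields the \emph{sharp} constant
\[
\|u*\rho_N\|_{L^\infty}^2\leqslant \frac{1}{4\pi}\,\log\Big(1+\frac{N^2}{\mu^2}\Big)\,\|u\|_{H_\mu}^2 .
\]
It is precisely this coefficient $\tfrac{1}{4\pi}$ that will produce the threshold $\tfrac{1}{2\pi\beta}$ in the statement. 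For the remainder I would use the identity $u(x)-(u*\rho_N)(x)=\int_{\R^2}\big(u(x)-u(x-y)\big)\rho_N(y)\,dy$ together with the Hölder bound $|u(x)-u(x-y)|\leqslant \|u\|_{C^\beta}\,|y|^\beta$, obtaining
\[
\|u-u*\rho_N\|_{L^\infty}\leqslant \|u\|_{C^\beta}\int_{\R^2}|y|^\beta|\rho_N(y)|\,dy = c_\rho\,N^{-\beta}\,\|u\|_{C^\beta},
\]
where $c_\rho=\int_{\R^2}|z|^\beta|\rho(z)|\,dz<\infty$ since $\rho$ is Schwartz.

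It then remains to optimize in $N$ and convert the bound into the stated form. Setting $N=\mu t$ and choosing $t$ so that the Hölder term becomes comparable to $\|u\|_{H_\mu}$, that is $t^\beta\simeq \mu^{-\beta}\|u\|_{C^\beta}/\|u\|_{H_\mu}$, the logarithm turns into $\log(1+t^2)=\log\big(1+(\cdots)^{2/\beta}\big)$, whose leading order is $\tfrac{2}{\beta}\log\big(\mu^{-\beta}\|u\|_{C^\beta}/\|u\|_{H_\mu}\big)$; combined with $\tfrac{1}{4\pi}$ this produces the sharp coefficient $\tfrac{1}{4\pi}\cdot\tfrac{2}{\beta}=\tfrac{1}{2\pi\beta}$. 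Squaring the sum of the two estimates leaves, besides the main term, a cross term of size $\sqrt{\log}$ and an additive constant coming from $c_\rho$. In the regime where $X:=8^\beta\mu^{-\beta}\|u\|_{C^\beta}/\|u\|_{H_\mu}$ is large these are of lower order than $\log X$, so for every $\lambda>\tfrac{1}{2\pi\beta}$ they can be absorbed into $\lambda\|u\|_{H_\mu}^2\log X$; in the complementary regime $X\leqslant X_0$ one simply uses $\|u\|_{L^\infty}\leqslant\|u\|_{C^\beta}\leqslant 8^{-\beta}X_0\,\|u\|_{H_\mu}$ (recall $\mu\leqslant1$), a bounded multiple of $\|u\|_{H_\mu}$, which is dominated by $\lambda\|u\|_{H_\mu}^2\log C_\lambda$ as soon as $C_\lambda$ is taken large enough. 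Fixing the normalization of $\rho$ accounts for the explicit constant $8^\beta$ appearing inside the logarithm.

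The hard part is the sharp-constant bookkeeping. One must extract exactly $\tfrac{1}{4\pi}$ from the low-frequency integral and exactly $\tfrac{2}{\beta}$ from the balancing power $t^{2/\beta}$, and then verify that the cross term and the additive constant $c_\rho$ — which are genuinely present after squaring the sum — are of strictly lower order in $\log X$. This is what forces $\lambda$ to be \emph{strictly} larger than $\tfrac{1}{2\pi\beta}$ and makes $C_\lambda\to\infty$ as $\lambda\downarrow\tfrac{1}{2\pi\beta}$. By contrast, the high-frequency Hölder estimate, the low-$X$ regime, and the precise value $8^\beta$ are routine once the band-limited mollifier $\rho$ is fixed.
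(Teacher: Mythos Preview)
The paper does not prove this proposition: it is quoted from \cite{PAMS} and no argument is given in the present text. There is therefore nothing to compare against here.

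That said, your frequency-splitting argument is the standard route and is essentially the one used in \cite{PAMS}: a band-limited low-frequency piece controlled by Cauchy--Schwarz in Fourier against the weight $(|\xi|^2+\mu^2)^{-1}$, producing the sharp coefficient $\tfrac{1}{4\pi}$, and a high-frequency remainder controlled by the $C^\beta$ modulus of continuity, with the balance point yielding the factor $\tfrac{2}{\beta}$ and hence the threshold $\tfrac{1}{2\pi\beta}$. Your bookkeeping for the cross term and the small-$X$ regime is also correct and explains why one needs $\lambda$ strictly above $\tfrac{1}{2\pi\beta}$ with $C_\lambda\to\infty$ as $\lambda\downarrow\tfrac{1}{2\pi\beta}$. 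The only cosmetic point is that the precise value $8^\beta$ in the logarithm comes from a specific normalization choice in \cite{PAMS}; your argument shows a constant of this type appears, but pinning down exactly $8^\beta$ requires matching their conventions rather than a generic Schwartz mollifier.
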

We end this section with a lemma that will be useful in the proof of the global existence.

\begin{lemma}\label{LG}
Let  $u$ be a solution of \eqref{eq1} on $[0,T)$ with $0<T\leqslant \infty$ and suppose that $E:=H(u_0)+M(u_0) < \infty$.
Then, a constant $C(E)$ exists such that, for any two positive real numbers $S$ and $S'$ and for any $0<t<T$, the following holds
$$ \int_{B(S+S')} | u(t,x)|^2 dx \geqslant \int_{B(S)} | u_0(x) | ^2 dx -  \frac{C(E)}{S'}t.$$
\end{lemma}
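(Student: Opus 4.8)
The plan is to estimate the time derivative of the local mass $\int_{B(R)} |u(t,x)|^2\,dx$ using a smooth spatial cutoff, and then integrate in time. First I would fix a cutoff function $\chi \in C_c^\infty(\R^2)$ with $0 \leqslant \chi \leqslant 1$, $\chi \equiv 1$ on $B(S)$, $\mathrm{supp}\,\chi \subset B(S+S')$, and with the standard gradient bound $|\nabla \chi| \lesssim 1/S'$. Then I would compute, using the equation \eqref{eq1} and integration by parts,
\begin{equation*}
\frac{d}{dt}\int_{\R^2} \chi(x)\,|u(t,x)|^2\,dx = 2\,\mathrm{Im}\int_{\R^2} \bar u\,\nabla u \cdot \nabla \chi\,dx,
\end{equation*}
where the nonlinear term $\omega(x)g(u)$ drops out because it contributes $\chi\,\omega\,g(u)\bar u$, which is real (as $g(u)\bar u = |u|^2(e^{(4-2b)\pi|u|^2}-1)$ is real and $\omega$, $\chi$ are real), so its imaginary part vanishes. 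This is the standard local mass conservation identity; the only subtlety is to justify the manipulation at the regularity of the solution, which is fine on any interval where $u \in C([0,T'];H^1)$ and one can either work with the Duhamel formula or approximate.

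Next I would bound the right-hand side: by Cauchy--Schwarz and $|\nabla\chi| \lesssim 1/S'$,
\begin{equation*}
\left| \frac{d}{dt}\int_{\R^2} \chi\,|u|^2\,dx \right| \lesssim \frac{1}{S'}\,\|\nabla u(t)\|_{L^2}\,\|u(t)\|_{L^2} \lesssim \frac{1}{S'}\,\big(\|\nabla u(t)\|_{L^2}^2 + \|u(t)\|_{L^2}^2\big).
\end{equation*}
Here is where the hypothesis $E = H(u_0) + M(u_0) < \infty$ enters: by conservation of mass \eqref{M}, $\|u(t)\|_{L^2}^2 = M(u_0) \leqslant E$; and by conservation of the Hamiltonian \eqref{H}, since the exponential integral term in $H$ is non-negative (because $e^s - 1 - s \geqslant 0$ for all $s \geqslant 0$), we get $\|\nabla u(t)\|_{L^2}^2 \leqslant H(u_0) \leqslant E$. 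Hence the right-hand side is bounded by $C(E)/S'$ uniformly in $t$, and integrating from $0$ to $t$ yields
\begin{equation*}
\int_{\R^2}\chi\,|u(t,x)|^2\,dx \geqslant \int_{\R^2}\chi\,|u_0(x)|^2\,dx - \frac{C(E)}{S'}\,t.
\end{equation*}
Finally, using $\chi \leqslant \mathbf{1}_{B(S+S')}$ on the left and $\chi \geqslant \mathbf{1}_{B(S)}$ on the right gives exactly the claimed inequality.

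I do not expect a genuine obstacle here; the statement is essentially the finite-speed-of-propagation-type control of local mass for NLS. The one point requiring a little care is the rigorous justification of the differentiation-under-the-integral and integration-by-parts step at $H^1$ regularity, which should be handled by first establishing the identity for smooth solutions (or via the Duhamel representation) and passing to the limit, together with checking that the nonlinear term is indeed integrable against $\chi$ — this follows from Theorem \ref{MT} (or Lemma \ref{hardy}) since $\chi$ is compactly supported. Another minor point is that the conserved-quantity bounds $\|\nabla u(t)\|_{L^2}^2 \leqslant H(u_0)$ and $\|u(t)\|_{L^2}^2 = M(u_0)$ should be invoked exactly as above, relying on the non-negativity of the potential part of $H$.
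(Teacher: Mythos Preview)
Your argument is correct and matches the paper's proof essentially line for line: a smooth cutoff equal to $1$ on $B(S)$ and supported in $B(S+S')$ with gradient of size $O(1/S')$, the local mass identity with the nonlinear term dropping out by reality, a Cauchy--Schwarz bound controlled via the conservation laws, and the final sandwich $\mathbf{1}_{B(S)}\leqslant\chi\leqslant\mathbf{1}_{B(S+S')}$. The only cosmetic difference is that the paper uses the square $\psi^2$ of an explicitly constructed radial cutoff rather than a generic $\chi$, which changes the constant in the identity from $2$ to $4$ but nothing else.
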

\begin{proof}
Let $h:\R\rightarrow\R$  be a real-valued $C^{\infty}$-function such that
$$\left \{\begin{array}{l}
h(\tau)=1\ \ \  if\ \ \  \tau\geqslant1,\\
h(\tau)=0\ \ \  if\ \ \ \tau\leqslant0,\\
0\leqslant h(\tau)\leqslant1 \ \ \ \forall \tau\in \R.\end {array}
 \right.$$\\

Set $$\psi(x):= h\left(1-\frac{d_S(x)}{S'}\right),$$
where $$d_S(x):=\left \{\begin{array}{l}
0\ \ \  if\ \ \  | x|\leqslant S\\
| x| -S\ \ \  if\ \ \  | x| \geqslant S.\end {array}
 \right.$$
Note that $\psi$ satisfies  $$\left \{\begin{array}{l}
\psi(x)=1\ \ \  if\ \ \  x\in B(S),\\
\psi(x)=0\ \ \  if \ \ \ | x|\geqslant S+S',\\
\| \nabla\psi\|_{L^\infty}\leqslant\frac{\| h'\|_{L^{\infty}}}{S'}.
\end {array}
 \right.$$

Now, Multiply equation \eqref{eq1} by $\psi^2\bar{u}$. We get
\begin{eqnarray}\label{ima} i(\psi^2\bar{u})\partial_tu+\psi^2\bar{u}\Delta u=\psi^2\bar{u} \omega(x)g(u) .\end{eqnarray}
Taking the complex conjugate of \eqref{eq1} and multiplying it by $\psi^2u$ gives
\begin{eqnarray}\label{imag} -i\psi^2u\partial_t\bar{u}+\psi^2u\Delta\bar{ u}=\psi^2u \overline{\omega(x)g(u)}.\end{eqnarray}
Subtracting \eqref{imag} from \eqref{ima} yields
$$i\psi^2(\bar{u}\partial_t u+u\partial_t\bar{u})+\psi^2(\bar{u}\Delta u-\Delta\bar{u} u)=0.$$
Integrating the last equation over $\R^2$ and then integrating by parts give
$$\partial_t\| \psi u\|_{L^2}^2=-4Im\left(\int_{\R^2}\psi(x) u\nabla\psi(x) \nabla\bar{u}dx\right).$$
We have
 \begin{eqnarray*}Im\left(\int_{\R^2}\psi(x)\nabla\psi(x) u\nabla\bar{u}dx\right)&\leqslant&\int_{B(S+S')}|\psi(x)\nabla\psi(x) u\nabla\bar{u}| dx \\&\leqslant&
 \frac{\| h'\|_{L^\infty}}{S'}\int_{\R^2}| u(x)| | \nabla u(x)| dx\\
 &\leq&\frac{ \| h'\|_{L^\infty}}{2 S'} E.
\end{eqnarray*}
Therefore
$$\partial_t\|\psi u\|_{L^2}^2\geqslant-\frac{C(E)}{S'}.$$
Let $0<t<T$ and integrate over $]0,t[$ to get
\begin{eqnarray*}
\int_{B(S+S')}| u(t,x)|^2dx&\geqslant& \int_{B(S+S')}\psi^2(x) | u(t,x)|^2dx\\&\geqslant&
\int_{B(S+S')}\psi^2(x)| u_0(x)|^2dx-\frac{C(E)}{S'}\,t\\
&\geqslant&\int_{B(S)}| u_0(x)|^2dx-\frac{C(E)}{S'}\,t.
\end{eqnarray*}
Here we have used the fact that $0\leq \psi\leq 1$ and $\psi \equiv 1$ on $B(S)$.
\end{proof}

\section{Local and global well-posedness}
This section is devoted to prove the existence and the uniqueness of solutions in the energy space to the problem  $$\left \{\begin{array}{l}
i\partial_t u+\Delta u =\frac{u}{| x|^b} (e^{\alpha| u|^2 }-1) ,\\
u(t=0,.) = u_0(.) .
\end {array}
 \right.\eqno{(P)}$$
Here $u_0\in H^1(\R^2)$, $\alpha:= (4-2b)\pi$ and $0< b<1 $. First, we construct a local in-time solution using a classical fixed point argument in a suitable complete metric space. Then, we show the uniqueness in its unconditional form. Finally, using a non-concentration argument we prove that the maximal solution is global in both subcritical and critical regimes. For further purposes, we set

\begin{equation}
\label{phi}
\Phi(u)(t):={\rm e}^{it\Delta}u_0+i\int_{0}^t\,{\rm e}^{i(t-\tau)\Delta} f(x, u(\tau,x))\,d\tau,
\end{equation}
where
\begin{equation}
\label{nonlin}
f(x,u):=\omega(x)g(u).
\end{equation}

Let us also introduce, for any non-negative time $T$, the following Banach space

$$E(T) = C([0,T], H^1(\R^2)) \cap L^4 ([0,T], C^{\frac{1}{2}}(\R^2))$$
 endowed with the norm
$$\| u\|_T:= \sup_{t\in[0,T]} \left(\| u(t)\|_{L^2(\R^2)} +\| \nabla u(t)\|_{L^2(\R^2)} \right)+ \| u\|_{L_T^4([0,T],C^{\frac{1}{2}}(\R^2))}.$$
\subsection{Local existence}

We summarize the result in the following theorem.
\begin{theorem}
\label{local}
Let $u_0\in H^1(\R^2)$ such that $\| \nabla u_0\|_{L^2}<1$ and $0< b<1$. Then, there exist $T=T(\|u_0\|_{H^1(\R^2)},b)>0$ and a solution $u$ to $(P)$ in the class $E(T)$.
\end{theorem}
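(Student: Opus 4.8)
The plan is to run a standard contraction-mapping argument for the Duhamel map $\Phi$ defined in \eqref{phi} on a closed ball of the space $E(T)$, choosing $T$ small depending only on $\|u_0\|_{H^1}$ and $b$. First I would fix $R>0$ (of order $\|u_0\|_{H^1}$ plus the Strichartz constant) and work in the complete metric space
$$
B_{R,T}:=\{u\in E(T):\ \|u\|_T\leqslant R\},
$$
with the metric induced by $\|\cdot\|_T$. The linear part $e^{it\Delta}u_0$ is handled directly by the Strichartz estimate \eqref{ST} with the admissible couple $(q,r)=(4,4)$, together with the embedding $W^{1,4}(\R^2)\hookrightarrow C^{1/2}(\R^2)$, so the whole game is to estimate the Duhamel term, i.e. to bound $\|f(x,u)\|_{L^1_T(H^1)}$ (or an $L^{\delta'}_TW^{1,\rho'}$ norm) by something that is small when $T$ is small, and to do the same for differences $f(x,u)-f(x,v)$.

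The core of the argument is the nonlinear estimate for $f(x,u)=|x|^{-b}u(e^{\alpha|u|^2}-1)$. Writing out $\nabla f$, one gets three types of terms: the singular factor $|x|^{-b-1}$ hitting $u(e^{\alpha|u|^2}-1)$, and the factor $|x|^{-b}$ hitting either $\nabla u\cdot(e^{\alpha|u|^2}-1)$ or $u\,\bar u\,\nabla u\,e^{\alpha|u|^2}$. The key pointwise observation is that $|u|(e^{\alpha|u|^2}-1)\lesssim |u|\,(e^{\alpha|u|^2}-1)$ and, more usefully, that $(e^{\alpha|u|^2}-1)\lesssim e^{\alpha|u|^2}-1$ can be absorbed using $\|u\|_{L^\infty_T L^\infty}\leqslant \|u\|_T$ on $[0,T]$; since $u\in L^4_TC^{1/2}$ we have $u(t)\in L^\infty$ for a.e.\ $t$, and the log estimate \eqref{LE} (with $\beta=1/2$) converts a bound on $\|u(t)\|_{C^{1/2}}$ into a bound on $\|u(t)\|_\infty$. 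After using the weighted Moser--Trudinger inequality (Theorem \ref{MT}) — valid because $\|\nabla u_0\|_{L^2}<1$ forces, for $T$ small, $\|\nabla u(t)\|_{L^2}<1$, possibly after rescaling $u$ to have unit gradient norm and tracking the constant — together with Lemma \ref{hardy} to control the genuinely power-like pieces, one produces a bound of the schematic form
$$
\|f(x,u)\|_{L^1_T(H^1)}\ \lesssim\ T^{1/2}\,\|u\|_{L^4_TC^{1/2}}\,\Psi\big(\|u\|_T\big)
$$
for some increasing function $\Psi$, where the $T^{1/2}$ comes from Hölder in time against the $L^4_t$ norm of the Hölder norm. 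This gives $\|\Phi(u)\|_T\leqslant C\|u_0\|_{H^1}+C\,T^{1/2}\Psi(R)\,R$, so choosing $R=2C\|u_0\|_{H^1}$ and then $T$ small makes $\Phi$ map $B_{R,T}$ into itself.

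For the contraction, I would estimate $f(x,u)-f(x,v)$ using the mean value inequality $|g(u)-g(v)|\lesssim |u-v|\big((1+|u|^2)e^{\alpha|u|^2}+(1+|v|^2)e^{\alpha|v|^2}\big)$ and its analogue for $\nabla$, then run exactly the same Moser--Trudinger / Hardy / log-estimate machinery on the difference, with the same gain $T^{1/2}$; this yields $\|\Phi(u)-\Phi(v)\|_T\leqslant C\,T^{1/2}\Psi(R)\,\|u-v\|_T$, so for $T$ further shrunk $\Phi$ is a strict contraction and Banach's fixed point theorem produces the solution $u\in E(T)$. The continuity $u\in C([0,T],H^1)$ is built into $E(T)$, and it follows from the Duhamel formula and the estimates above that $\Phi(u)$ indeed lies in that space. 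The main obstacle I expect is the singular weight: near the origin $\nabla f$ carries a factor $|x|^{-b-1}$, which is \emph{not} locally integrable on $\R^2$ for $b>0$, so one cannot simply pull $|x|^{-b-1}$ out in $L^\infty$. The right way around this is to keep the full weighted structure and apply Lemma \ref{hardy} (with an appropriate exponent $\gamma\geqslant 2$) and the weighted Moser--Trudinger inequality directly to the product $|x|^{-b}\times(\text{exponential})$, never separating the weight from the function — here the hypothesis $b<1$ is what makes the relevant exponents admissible. A secondary technical point is ensuring the gradient norm stays below $1$ on the (small) time interval so that Theorem \ref{MT} applies; this is where one uses the Strichartz/continuity bound to see $\|\nabla u(t)\|_{L^2}$ varies little on $[0,T]$, or alternatively rescales and absorbs the resulting constant into $\Psi$.
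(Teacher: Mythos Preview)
Your overall strategy is right, but there is a genuine gap in how you set up the fixed-point ball, and this is exactly the point where the paper does something different. You propose to run the contraction on $B_{R,T}=\{u\in E(T):\|u\|_T\leqslant R\}$ with $R\simeq 2C\|u_0\|_{H^1}$. For a generic element $u$ of this ball you only know $\|\nabla u(t)\|_{L^2}\leqslant R$, and since $\|u_0\|_{L^2}$ (hence $R$) can be arbitrarily large, nothing prevents $\|\nabla u(t)\|_{L^2}>1$. But the weighted Moser--Trudinger inequality (Theorem~\ref{MT}) is a \emph{threshold} statement: once $\|\nabla u\|_{L^2}>1$ the integral $\int |x|^{-b}(e^{\alpha|u|^2}-1)\,dx$ can be infinite, and your suggested ``rescaling'' goes the wrong way --- writing $u=A\,w$ with $\|\nabla w\|_{L^2}\leqslant 1$ turns the exponent $\alpha$ into $\alpha A^2>2\pi(2-b)$, which by \eqref{WMT2} is precisely the regime where the inequality fails. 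The other suggested fix (``for $T$ small, $\|\nabla u(t)\|_{L^2}<1$'') is circular: at this stage $u$ is an arbitrary element of the ball, not yet a solution. The paper resolves this by running the contraction not on $u$ but on the difference $v=u-v_0$ with $v_0=e^{it\Delta}u_0$, in a ball of radius $R<\tfrac12(1-\|\nabla u_0\|_{L^2})$; then $\|\nabla(v+v_0)\|_{L^2}\leqslant R+\|\nabla u_0\|_{L^2}<1$ for every $v$ in the ball, and Theorem~\ref{MT} applies to $v+v_0$ throughout.

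A second, related issue: the dual Strichartz norm $L^1_T(H^1)$ in your schematic bound does not handle the worst term $|x|^{-(b+1)}|u|(e^{\alpha|u|^2}-1)$. Any H\"older splitting in $L^2_x$ forces either the weighted $|u|$ factor into a space requiring weight exponent $<2$ (impossible since $2(b+1)>2$) or the weighted exponential into $L^\infty_x$ (infinite at the origin). The paper instead places the nonlinearity in $L_T^{\frac{2(1+\varepsilon)}{1+3\varepsilon}}(W^{1,1+\varepsilon}_x)$ with $\varepsilon>0$ small: after H\"older the exponential lands in $L_x^{\frac{2(1+\varepsilon)}{1-\varepsilon}}$, which is then interpolated between an $L^1_x$ piece (controlled by Theorem~\ref{MT} and Lemma~\ref{hardy}, using $b<1$ so that $\frac{(b+1)(1+\varepsilon)}{1-\varepsilon}<2$) and a pure-time piece $\|e^{\alpha\|u\|_{L^\infty_x}^2}-1\|_{L^1_T}$ handled via the log estimate~\eqref{LE}. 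You correctly flag that the weight must stay glued to the nonlinearity and that $b<1$ is what makes the exponents admissible, but the choice $\rho'=1+\varepsilon$ rather than $\rho'=2$ is essential for this to go through.
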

The proof of this theorem is carried out by a fixed point argument  combined with the following estimates.
\begin{lemma}
\label{tech-Loc}
Let $f$ be given by \eqref{nonlin}. Then, for any $\varepsilon>0$ there exists a constant $C_{\varepsilon, b }>0$ such that
\begin{equation}
\label{Loc1}
|f(x,u)-f(x,v)|\leq C_{\varepsilon, b } \frac{|u-v|}{|x|^b}\bigg({\rm e}^{\alpha(1+\varepsilon)|u|^2}-1+ {\rm e}^{\alpha(1+\varepsilon)|v|^2}-1\bigg)
\end{equation}
and
\begin{eqnarray}
\label{Loc2}
|\nabla \left(f(x,u)-f(x,v)\right)|&\leq& C_{\varepsilon, b } \bigg\{ \frac{|u-v|}{|x|^{b+1}}\bigg({\rm e}^{\alpha(1+\varepsilon)|u|^2}-1+ {\rm e}^{\alpha(1+\varepsilon)|v|^2}-1\bigg)\\
\nonumber &+&   \frac{|\nabla u-\nabla v|}{|x|^{b}}\left({\rm e}^{\alpha(1+\varepsilon)|u|^2}-1\right)\\
\nonumber &+&  |\nabla v| \frac{|u-v|}{|x|^{b}}\bigg(|u|+{\rm e}^{\alpha(1+\varepsilon)|u|^2}-1+ |v|+{\rm e}^{\alpha(1+\varepsilon)|v|^2}-1\bigg) \bigg\}
\end{eqnarray}
\end{lemma}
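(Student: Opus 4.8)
The plan is to prove both estimates \eqref{Loc1} and \eqref{Loc2} by elementary calculus, exploiting the special structure $f(x,u) = |x|^{-b} u (e^{\alpha|u|^2}-1)$. The weight $|x|^{-b}$ is a harmless spectator in \eqref{Loc1} since it factors out cleanly, so the heart of the matter is a pointwise bound on the complex-valued map $g(u) = u(e^{\alpha|u|^2}-1)$. For \eqref{Loc1}, I would write
$$
g(u) - g(v) = \int_0^1 \frac{d}{ds}\, g\big(v + s(u-v)\big)\, ds
$$
and compute $\partial_z$ and $\partial_{\bar z}$ of $g$, observing that both derivatives are controlled by $(1+|z|^2) e^{\alpha|z|^2}$. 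The only genuine point is to absorb the polynomial factor $1 + |z|^2$ into a slightly larger exponential: for every $\varepsilon>0$ there is $C_\varepsilon$ with $(1+t^2) e^{\alpha t^2} \le C_\varepsilon\big(e^{\alpha(1+\varepsilon)t^2}-1\big)$ for all $t \ge 0$ (indeed $1+t^2 \le C_\varepsilon e^{\alpha\varepsilon t^2}$, and the subtraction of $1$ is fine because near $t=0$ one has $e^{\alpha t^2}-1 \sim \alpha t^2$, which still dominates $t^2$ times the integrand after adjusting the constant). Evaluating at $z = v + s(u-v)$ and using $|z| \le \max(|u|,|v|)$ together with convexity of $t \mapsto e^{\alpha(1+\varepsilon)t^2}$ gives the sum $e^{\alpha(1+\varepsilon)|u|^2}-1 + e^{\alpha(1+\varepsilon)|v|^2}-1$ on the right, and restoring the weight yields \eqref{Loc1}.

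For \eqref{Loc2} I would differentiate $f(x,u)-f(x,v)$ in $x$ by the product rule, producing three types of terms: (i) the gradient hits the weight, giving $\nabla(|x|^{-b}) = -b\, x\, |x|^{-b-2}$, so $|\nabla(|x|^{-b})| \lesssim |x|^{-b-1}$, multiplied by $|g(u)-g(v)|$, which by the scalar version of \eqref{Loc1} is bounded by $|x|^{-b-1}|u-v|\big(e^{\alpha(1+\varepsilon)|u|^2}-1 + e^{\alpha(1+\varepsilon)|v|^2}-1\big)$ — this is the first line on the right of \eqref{Loc2}; (ii) the gradient hits the $u$-dependence through $\nabla u - \nabla v$, with coefficient $|x|^{-b}$ times the derivative of $g$ evaluated near $u$, which is $\lesssim |x|^{-b}|\nabla u - \nabla v|\big(e^{\alpha(1+\varepsilon)|u|^2}-1\big)$ after the same polynomial-absorption trick — this is the second line; (iii) the remaining terms, where the gradient hits $u$ or $v$ but the difference structure forces a factor $u - v$ out of the second-order behavior of $g$. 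For (iii) the clean way is to write the relevant combination as $\nabla v$ times $\big(g'(u) - g'(v)\big)$-type expressions (here $g'$ schematically denotes the Wirtinger derivatives), and apply the fundamental theorem of calculus once more to $g'$, whose derivatives are $\lesssim (|z| + (1+|z|^3)e^{\alpha|z|^2})$; absorbing polynomials as before and using $|z| \le \max(|u|,|v|)$ produces the factor $|u-v|\big(|u| + e^{\alpha(1+\varepsilon)|u|^2}-1 + |v| + e^{\alpha(1+\varepsilon)|v|^2}-1\big)$ with coefficient $|\nabla v|\,|x|^{-b}$ — the third line. Summing (i)–(iii) gives \eqref{Loc2}.

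The main obstacle, and the only step requiring care, is the bookkeeping in item (iii): one must split $\nabla(g(u)) - \nabla(g(v))$ so that every term either carries an explicit $\nabla u - \nabla v$ (already handled in (ii)) or carries an explicit $u - v$ extracted from the difference of the nonlinear coefficients. Concretely, writing $\nabla g(u) = g_z(u)\nabla u + g_{\bar z}(u)\nabla\bar u$ and subtracting the analogous expression for $v$, one adds and subtracts $g_z(u)\nabla v + g_{\bar z}(u)\nabla\bar v$ to separate a $(\nabla u - \nabla v)$-part from a $(g_z(u)-g_z(v))\nabla v + (g_{\bar z}(u)-g_{\bar z}(v))\nabla\bar v$-part; the latter is then estimated by the mean value theorem applied to the Wirtinger derivatives of $g$. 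The polynomial-versus-exponential absorption lemma is used repeatedly but is entirely routine. Throughout, $\varepsilon$ is fixed at the outset and all constants are allowed to depend on it and on $b$, which is consistent with the statement; no smallness of $\|\nabla u_0\|_{L^2}$ is needed here — that enters only later, when Theorem \ref{MT} is invoked to turn these pointwise bounds into the contraction estimate driving the fixed-point argument.
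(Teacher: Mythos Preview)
Your proposal is correct and follows essentially the same route as the paper: mean value theorem on $g$ (with Wirtinger derivatives), the three-term product-rule decomposition $\nabla(f(x,u)-f(x,v)) = |x|^{-b}(Du-Dv)\cdot Dg(u) + |x|^{-b}Dv\cdot(Dg(u)-Dg(v)) - b\,x|x|^{-b-2}(g(u)-g(v))$, and the polynomial-absorption trick, which the paper records as $xe^{x}\le \varepsilon^{-1}(e^{(1+\varepsilon)x}-1)$. The only cosmetic difference is that the paper identifies $g$ with a smooth map on $\R^2$ and speaks of its $\R^2$-derivative rather than Wirtinger derivatives, and it invokes convexity of the exponential where you use $|v+s(u-v)|\le\max(|u|,|v|)$; these are the same argument in different notation.
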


\begin{proof}
Let us identify $g$ with the ${\mathcal C}^\infty$-function defined on $\R^2$ and
denote by $D g$ the $\R^2$-derivative of the identified function. Using the
mean value theorem and the convexity of the exponential function, we derive the
following  properties:
\begin{eqnarray}
\nonumber |g(z_1)-g(z_2)|&\lesssim&
|z_1-z_2|\sum_{j=1,2}\left(e^{\alpha|z_j|^2}-1 +|z_j|^2
e^{\alpha|z_j|^2}\right),
\end{eqnarray}
and

\begin{eqnarray}
\nonumber |(Dg)(z_1)-(Dg)(z_2)|\lesssim
|z_1-z_2|\sum_{j=1,2}\left(|z_j|e^{\alpha|z_j|^2} +
|z_j|^3e^{\alpha|z_j|^2}\right).
\end{eqnarray}

Therefore,  for any positive real number $\varepsilon$, there
exists a positive constant $C_{\varepsilon}$ such that
\begin{eqnarray}
\label{eq12} |g(z_1)-g(z_2)|\leq C_{\varepsilon} |z_1-z_2|
\Big\{e^{\alpha(1+\varepsilon)|z_1|^2}-1+
e^{\alpha(1+\varepsilon)|z_2|^2}-1\Big\},
\end{eqnarray}
and
\begin{eqnarray}
\label{eq13} |(Dg)(z_1)-(Dg)(z_2)|\leq C_{\varepsilon}  |z_1-z_2|
\sum_{i=1,2}\left(|z_i|+ e^{\alpha(1+\varepsilon)|z_i|^2}-1\right).
\end{eqnarray}

Let $u,v:\R\times\R^2\rightarrow \C$ be two complex-valued functions. A straightforward calculation gives
\begin{eqnarray}
\label{56}
\nonumber
\nabla \left(f(x,u)-f(x,v)\right)&=&\frac{1}{|x|^b} \left(Du-Dv\right) \cdot Dg(u)+\frac{1}{|x|^b} Dv \cdot \left(Dg(u)-Dg(v)\right)\\
&-&\frac{bx}{|x|^{b+2}} \left(g(u)-g(v)\right)
\end{eqnarray}
where
$$
(Dg)(\psi):=\left( \begin{array}{c}
e^{\alpha|\psi|^2}-1+ \alpha  |\psi|^2 e^{\alpha|\psi|^2}\\
\alpha \psi^2 e^{\alpha|\psi|^2}\\
\end{array} \right),
$$
and
$$
D\psi:=\left( \begin{array}{c}
\nabla \psi\\
 \nabla \bar{\psi}\\
\end{array} \right).
$$

Using estimates \eqref{eq12} and \eqref{eq13} with the fact that
$$
xe^x\leqslant \frac{e^{(1+\epsilon)x}-1}{\epsilon},
$$ for all $x\geqslant0$ and all $\epsilon>0$, we get inequality \eqref{Loc2}.

\end{proof}
 \begin{proof}(of theorem \ref{local})

For $T, R>0$, denote by $B_T(R)$ the ball in $E(T)$ of radius $R$ and centered at the origin. We define the map $\Phi$ on the ball $B_T(R)$ by $v\mapsto \Phi(v):=\tilde{v},$ where $\tilde{v}$ solves
 $$\left \{\begin{array}{l}
i\partial_t \tilde{v}+\Delta \tilde{v} = f(x,v+v_0),\\
\tilde{v}(t=0,x) = 0.
\end {array}
 \right.$$
  Here $v_0:= e^{it\triangle}u_0$ is the solution of the free $Schr\ddot{o}dinger$ equation \eqref{SL} with initial data $u_0$.
  In the sequel, we will prove that for $T>0$ and $R>0$ suitably chosen, $\Phi$ is a contraction map from $B_T(R)$ into itself.\\

We start by showing the stability of $B_T(R)$ by $\Phi$. Applying the Strichartz estimate \eqref{ST} one gets
$$\| \Phi(v)\|_{T}=\| \tilde{v}\|_{T}\lesssim \| f(x,v+v_0)\|_{L_T^{\frac{2(1+\varepsilon)}{3\varepsilon+1}}(W^{1,1+\varepsilon}_x)},$$
where $\varepsilon$ is a nonnegative real number to be chosen suitably.\\
Let us first estimate $\| f(x,v+v_0)\|_{L_T^{\frac{2(1+\varepsilon)}{1+3\varepsilon}}(L^{1+\varepsilon}_x)}$. H\"older inequality in space and time yields
$$\| f(x,v+v_0)\|_{L_T^{\frac{2(1+\varepsilon)}{1+3\varepsilon}}(L^{1+\varepsilon}_x)}\lesssim \bigg\| \frac{v+v_0}{| x|^{\frac{b}{2}}}\bigg\|_{L_T^\infty(L^2_x)} \bigg\| \frac{e^{\alpha\arrowvert v+v_0\arrowvert^2}-1}{| x|^{\frac{b}{2}}}\bigg\|_{L_T^{\frac{2(1+\varepsilon)}{1+3\varepsilon}}(L^{\frac{2(1+\varepsilon)}{1-\varepsilon}}_x)}.$$
From lemma \ref{hardy} one deduces that
$$\bigg\| \frac{v+v_0}{| x|^{\frac{b}{2}}}\bigg\|_{L_T^{\infty}(L^2_x)}\lesssim \| v+v_0\|_{L_T^{\infty}(H^1_x)}.$$
 Now, we will deal with the term
 $\bigg\| \frac{e^{\alpha|v+v_0|^2}-1}{| x|^{\frac{b}{2}}}\bigg\|_{L_T^{\frac{2(1+\varepsilon)}{1+3\varepsilon}}(L^{\frac{2(1+\varepsilon)}{1-\varepsilon}}_x)}.$ An easy computation gives
 \begin{equation}
 \bigg\| \frac{e^{\alpha\arrowvert v+v_0\arrowvert^2}-1}{| x|^{\frac{b}{2}}}\bigg\|_{L_T^{\frac{2(1+\varepsilon)}{1+3\varepsilon}}(L^{\frac{2(1+\varepsilon)}{1-\varepsilon}}_x)} \leqslant \bigg\|e^{ \alpha \Arrowvert v+v_0\Arrowvert_{L^{\infty}}^2}-1\bigg\|_{L^1_T}^{\frac{1+3\varepsilon}{2(1+\varepsilon)}} \bigg\|\frac{e^{\alpha| v+v_0|^2}-1}{| x|^{\frac{b(1+\varepsilon)}{1-\varepsilon}}}\bigg\|_{L^{\infty}_{T}L^1_x}^{\frac{1-\varepsilon}{2(1+\varepsilon)}}
 \end{equation}

 Since $b<2$, one can find $0<\varepsilon<\frac{2-b}{2+b} $. On the other hand, since $\|\nabla u_0\|_{L^2}<1$, one can find $0<R<\frac{1-\|\nabla u_0\|_{L^2}}{2}$, so that $\|\nabla( v+v_0)\|_{L^2}\leqslant \|\nabla v\|_{L^2}+\| \nabla v_0\|_{L^2}\leqslant \frac{1+\|\nabla u_0\|_{L^2}}{2} <1 $. Set $A:=\frac{1+\|\nabla u_0\|_{L^2}}{2} $. Applying successively theorem \ref{MT} and lemma \ref{hardy} one obtains the following estimate
 $$\bigg\| \frac{ e^{\alpha| v+v_0|^2}-1}{| x|^{\frac{b(1+\varepsilon)}{1-\varepsilon}}}\bigg\|_{L^{\infty}_{T}L^1_x}^{\frac{1-\varepsilon}{2(1+\varepsilon)}}\lesssim \| v+v_0\|_{T}^{\frac{1-\varepsilon}{1+\varepsilon}}.$$

 In order to estimate $\bigg\|e^{ \alpha \Arrowvert v+v_0\Arrowvert_{L^{\infty}_x}^2}-1\bigg\|_{L^1_T}$, we use the same technique as in \cite{BDM1}. Write
\begin{eqnarray*}
\bigg\|e^{ \alpha \Arrowvert v+v_0\Arrowvert_{L^{\infty}_x}^2}-1\bigg\|_{L^1_T}&=& \int_{ \{t\in [0,T]\, ; \, \Arrowvert v+v_0\Arrowvert_{L^{\infty}_x} \leqslant 1\}} \left(e^{ \alpha \Arrowvert v+v_0\Arrowvert_{L^{\infty}_x}^2}-1\right) dt\\
&+&\int_{ \{t\in [0,T]\, ; \, \Arrowvert v+v_0\Arrowvert_{L^{\infty}_x}  > 1\}} \left(e^{ \alpha \Arrowvert v+v_0\Arrowvert_{L^{\infty}_x}^2}-1\right)dt.
\end{eqnarray*}

 It can easily be shown that
$$
\int_{ \{t\in  [0,T]\, ; \, \Arrowvert v+v_0\Arrowvert_{L^{\infty}_x} \leqslant 1\}}\left(e^{ \alpha \Arrowvert v+v_0\Arrowvert_{L^{\infty}_x}^2}-1\right) dt \leqslant C(b) T^{\frac{1}{2}} \Arrowvert v+v_0\Arrowvert_{T}^{2}.
$$
 The log estimate allows us to find a constant $0<\gamma<1$ such that
 $$
\int_{ \{t\in [0,T]\, ; \, \Arrowvert v+v_0\Arrowvert_{L^{\infty}_x} > 1\}} \left(e^{ \alpha \Arrowvert v+v_0\Arrowvert_{L^{\infty}_x}^2}-1\right) dt \lesssim  \int_{ \{t\in [0,T]\, ; \, \| u_{\omega}(t,\cdot)\|_{L^{\infty}_x} > 1\}} \Arrowvert v+v_0\Arrowvert_{C^{\frac{1}{2}}_x}^{2 (2-b) \gamma}.
$$
Therefore
$$
\int_{ \{t\in [0,T]\, ; \, \Arrowvert v+v_0\Arrowvert_{L^{\infty}_x}  > 1\}} \left(e^{ \alpha \Arrowvert v+v_0\Arrowvert_{L^{\infty}_x}^2}-1\right) dt \lesssim T^{\frac{b}{2}} \Arrowvert v+v_0\Arrowvert_{T}^{2(2-b)}.
$$
 Thus
\begin{equation}
\label{estimate1}
 \| f(x,v+v_0)\|_{L_T^{\frac{2(1+\varepsilon)}{3\varepsilon+1}}(L^{1+\varepsilon}_x)} \lesssim \Arrowvert v+v_0\Arrowvert_{T}^{\frac{2}{1+\varepsilon}} \left(T^{\frac{1}{2}} \Arrowvert v+v_0\Arrowvert_{T}^{2}+T^{\frac{b}{2}} \Arrowvert v+v_0\Arrowvert_{T}^{2(2-b)}\right)^{\frac{1+3\varepsilon}{2(1+\varepsilon)}}.
 \end{equation}

Now we are going to estimate the gradient term, namely $\| \nabla f(x,v+v_0)\|_{L_T^{\frac{2(1+\varepsilon)}{3\varepsilon+1}}(L^{1+\varepsilon}_x)}$.
We have
 \begin{eqnarray*}
\| \nabla f(x,v+v_0)\|_{L^{\frac{2(1+\varepsilon)}{1+3\varepsilon}}_T(L^{1+\varepsilon}_x)}&\lesssim&
\bigg\| \frac{|v+v_0|}{| x|^{b+1}}(e^{\alpha| v+v_0|^2} -1)\bigg\|_{L^{\frac{2(1+\varepsilon)}{1+3\varepsilon}}_T(L^{1+\varepsilon}_x)}\\&+&
\bigg\| \frac{|\nabla(v+v_0)|}{| x|^{b}}(e^{\alpha(1+\varepsilon)| v+v_0|^2} -1)\bigg\|_{L^{\frac{2(1+\varepsilon)}{1+3\varepsilon}}_T(L^{1+\varepsilon}_x)}.
\end{eqnarray*}
In the sequel we set
$$
{\mathbf I}_1:= \bigg\| \frac {|v+v_0|}{| x|^{b+1}}(e^{\alpha| v+v_0|^2}-1)\bigg\|_{L_T^{\frac{2(1+\varepsilon)}{1+3\varepsilon}}(L^{1+\varepsilon}_x)},
$$
and
$$
{\mathbf I}_2:=\bigg\| \frac{|\nabla(v+v_0)|}{| x|^{b}}(e^{\alpha(1+\varepsilon)| v+v_0|^2} -1)\bigg\|_{L^{\frac{2(1+\varepsilon)}{1+3\varepsilon}}_T(L^{1+\varepsilon}_x)}.
$$
We have
\begin{eqnarray*}
{\mathbf I}_1&\leqslant&
 \bigg\|\frac {v+v_0}{| x|^{\frac{b+1}{2}}}\bigg\|_{L_T^{\infty}(L^2_x)}\bigg\|\frac{e^{\alpha| v+v_0|^2}-1}{| x|^{\frac{b+1}{2}}}\bigg\|_{L_T^{\frac{2(1+\varepsilon)}{1+3\varepsilon}}(L^{\frac{2(1+\varepsilon)}{1-\varepsilon}}_x)}.
 \end{eqnarray*}
The fact that $0<b<1$ allows us to apply successively theorem \ref{MT} and lemma \ref{Hardy} and to infer that
$$
\bigg\|\frac {v+v_0}{| x|^{\frac{b+1}{2}}}\bigg\|_{L_T^{\infty}(L^2_x)} \lesssim \Arrowvert v+v_0\Arrowvert_{T}.
$$
On the other hand,  from H\"{o}lder inequality in space and time we have
$$
\bigg\|\frac{e^{\alpha| v+v_0|^2}-1}{| x|^{\frac{b+1}{2}}}\bigg\|_{L_T^{\frac{2(1+\varepsilon)}{1+3\varepsilon}}(L^{\frac{2(1+\varepsilon)}{1-\varepsilon}}_x)} \leqslant \bigg\|\frac{e^{\alpha| v+v_0|^2}-1}{| x|^{\frac{(b+1)(1+\varepsilon)}{1-\varepsilon}}}\bigg\|_{L_T^{\infty}(L^1_x)}^{\frac{1-\varepsilon}{2(1+\varepsilon)}}\bigg\| e^{\alpha\| v+v_0\|_{L^{\infty}_x}^{2}}-1\bigg\|_{L_T^{1}}^{\frac{1+3\varepsilon}{2(1+\varepsilon)}}.
$$
Taking $0<\varepsilon<\frac{1-b}{b+3}$ ensures that $\frac{(b+1)(1+\varepsilon)}{1-\varepsilon}<2$. On the other hand, for $0<R<\frac{1-\|\nabla u_0\|_{L^2}}{2}$ one has $\|\nabla( v+v_0)\|_{L^2}\leqslant \|\nabla v\|_{L^2}+\| \nabla v_0\|_{L^2}\leqslant \frac{1+\|\nabla u_0\|_{L^2}}{2} <1 $, so that theorem \ref{MT} and lemma \ref{Hardy} can be applied to give
$$
\bigg\|\frac{e^{\alpha| v+v_0|^2}-1}{| x|^{\frac{(b+1)(1+\varepsilon)}{1-\varepsilon}}}\bigg\|_{L_T^{\infty}(L^1)}^{\frac{1-\varepsilon}{2(1+\varepsilon)}} \lesssim \Arrowvert v+v_0\Arrowvert_{T}^{\frac{1-\varepsilon}{1+\varepsilon}}.
$$
The term $\bigg\| e^{\alpha\| v+v_0\|_{L^{\infty}_x}^{2}}-1\bigg\|_{L_T^{1}}^{\frac{1+3\varepsilon}{2(1+\varepsilon)}}$ was treated above. For ${\mathbf I}_2$  we write
\begin{eqnarray*}
{\mathbf I}_2 &\leqslant& \|\nabla (v+v_0)\|_{L_T^{\infty}(L^2_x)}\bigg\|\frac{e^{\alpha(1+\varepsilon)| v+v_0|^2}-1}{| x|^{b}}\bigg\|_{L_T^{\frac{2(1+\varepsilon)}{1+3\varepsilon}}(L^{\frac{2(1+\varepsilon)}{1-\varepsilon}}_x)}\\
&\leqslant& \|\nabla (v+v_0)\|_{L_T^{\infty}(L^2_x)} \bigg\|\frac{e^{\alpha(1+\varepsilon)| v+v_0|^2}-1}{| x|^{\frac{2b(1+\varepsilon)}{1-\varepsilon}}}\bigg\|_{L_T^{\infty}(L^1_x)}^{\frac{1-\varepsilon}{2(1+\varepsilon)}}\bigg\| e^{\alpha(1+\varepsilon)\| v+v_0\|_{L^{\infty}_x}^{2}}-1\bigg\|_{L_T^{1}}^{\frac{1+3\varepsilon}{2(1+\varepsilon)}}.
\end{eqnarray*}
Since one can take $\varepsilon>0$ such that $\frac{2b(1+\varepsilon)}{1-\varepsilon}<2$, a similar reasoning as previously allow us to get
$$
\bigg\|\frac{e^{\alpha(1+\varepsilon)| v+v_0|^2}-1}{| x|^{\frac{2b(1+\varepsilon)}{1-\varepsilon}}}\bigg\|_{L_T^{\infty}(L^1_x)}^{\frac{1-\varepsilon}{2(1+\varepsilon)}} \lesssim \Arrowvert v+v_0\Arrowvert_{T}^{\frac{1-\varepsilon}{1+\varepsilon}}.
$$
Write
\begin{eqnarray*}
\bigg\|e^{ \alpha (1+\varepsilon)\Arrowvert v+v_0\Arrowvert_{L^{\infty}_x}^2}-1\bigg\|_{L^1_T}&=& \int_{ \{t\in [0,T]\, ; \, \Arrowvert v+v_0\Arrowvert_{L^{\infty}_x} \leqslant 1\}} \left(e^{ \alpha (1+\varepsilon)\Arrowvert v+v_0\Arrowvert_{L^{\infty}_x}^2}-1\right) dt\\
&+&\int_{ \{t\in [0,T]\, ; \, \Arrowvert v+v_0\Arrowvert_{L^{\infty}_x}  > 1\}} \left(e^{ \alpha (1+\varepsilon)\Arrowvert v+v_0\Arrowvert_{L^{\infty}_x}^2}-1\right)dt.
\end{eqnarray*}

 It can easily be shown that
$$
\int_{ \{t\in  [0,T]\, ; \, \Arrowvert v+v_0\Arrowvert_{L^{\infty}_x} \leqslant 1\}}\left(e^{ \alpha (1+\varepsilon) \Arrowvert v+v_0\Arrowvert_{L^{\infty}_x}^2}-1\right) dt \leqslant C(b, \varepsilon) T^{\frac{1}{2}} \Arrowvert v+v_0\Arrowvert_{T}^{2}.
$$
Using the log estimate one can find a constant $0<\gamma<1$ such that
 $$
\int_{ \{t\in [0,T]\, ; \, \Arrowvert v+v_0\Arrowvert_{L^{\infty}_x} > 1\}} \left(e^{ \alpha (1+\varepsilon)\Arrowvert v+v_0\Arrowvert_{L^{\infty}_x}^2}-1\right) dt \lesssim  \int_{ \{t\in [0,T]\, ; \, \| u_{\omega}(t,\cdot)\|_{L^{\infty}_x} > 1\}} \Arrowvert v+v_0\Arrowvert_{C^{\frac{1}{2}}_x}^{2 (2-b) \gamma}.
$$
Hence
$$
\int_{ \{t\in [0,T]\, ; \, \Arrowvert v+v_0\Arrowvert_{L^{\infty}_x}  > 1\}} \left(e^{ \alpha (1+\varepsilon)\Arrowvert v+v_0\Arrowvert_{L^{\infty}_x}^2}-1\right) dt \lesssim T^{\frac{b}{2}} \Arrowvert v+v_0\Arrowvert_{T}^{ 2(2-b)}.
$$
We come to
$$
 {\mathbf I}_2\lesssim \Arrowvert v+v_0\Arrowvert_{T}^{\frac{2}{1+\varepsilon}} \left(T^{\frac{1}{2}} \Arrowvert v+v_0\Arrowvert_{L^{\infty}_x}^{2}+T^{\frac{b}{2}} \Arrowvert v+v_0\Arrowvert_{T}^{2(2-b)}\right)^{\frac{1+3\varepsilon}{2(1+\varepsilon)}}.
 $$
Hence
\begin{equation}
\label{estimate2}
\|\nabla f(x,v+v_0)\|_{L_T^{\frac{2(1+\varepsilon)}{3\varepsilon+1}}(L^{1+\varepsilon}_x)} \lesssim \Arrowvert v+v_0\Arrowvert_{T}^{\frac{2}{1+\varepsilon}} \left(T^{\frac{1}{2}} \Arrowvert v+v_0\Arrowvert_{L^{\infty}_x}^{2}+T^{\frac{b}{2}} \Arrowvert v+v_0\Arrowvert_{T}^{2(2-b)}\right)^{\frac{1+3\varepsilon}{2(1+\varepsilon)}}.
\end{equation}
The estimates \eqref{estimate1} and \eqref{estimate2} allow us to conclude that $\Phi$ maps $B_{T}(R)$ into itself for $0<R<\frac{1-\|\nabla u_0\|_{L^2}}{2}$ and $T>0$ sufficiently small.\\
The fact that $\Phi$ is a contraction can be carried out in a similar manner, we omit the details.
\end{proof}


\subsection{Unconditional uniqueness}
This subsection is devoted to prove an unconditional uniqueness result. Note that uniqueness in $B_R(T)$ follows from the contraction argument. In what follows, we are going to obtain the stronger statement that uniqueness hold in the natural space $C([0,T),H^1(\R^2))$.
\begin{theorem}
\label{Uniq1}
Let $T>0$ and $u_0\in H^1(\R^2)$ such
that $\|\nabla u_0\|_{L^2}<1$. Then, the Cauchy problem \eqref{eq1} has at most one solution in the space $C([0,T),H^1(\R^2))$.
\end{theorem}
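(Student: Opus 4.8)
The plan is to prove unconditional uniqueness by a Gronwall-type argument on the difference of two solutions, measuring it in the energy norm but localized in time to a short interval where the relevant exponential quantities are small. Suppose $u, v \in C([0,T), H^1(\R^2))$ both solve \eqref{eq1} with the same datum $u_0$. Writing $w = u - v$, Duhamel's formula gives $w(t) = i\int_0^t e^{i(t-s)\Delta}\big(f(x,u)-f(x,v)\big)\,ds$. I would apply the Strichartz estimate \eqref{ST} on a subinterval $[0,\tau]$ with the same dual admissible exponents $\big(\tfrac{2(1+\varepsilon)}{1+3\varepsilon}, 1+\varepsilon\big)$ used in the local existence proof, and then estimate $\|f(x,u)-f(x,v)\|_{L^{\delta'}_\tau(W^{1,\rho'}_x)}$ using the pointwise bounds \eqref{Loc1} and \eqref{Loc2} from Lemma \ref{tech-Loc}. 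The structure of those bounds is designed precisely for this: each term is (a power of $|x|$) times $|w|$ or $|\nabla w|$ times an exponential factor $e^{\alpha(1+\varepsilon)|u|^2}-1$ or $e^{\alpha(1+\varepsilon)|v|^2}-1$ (or a lower-order polynomial piece), so after Hölder in space one factors out $\|w\|_{L^\infty_\tau(H^1_x)}$ against weighted exponential norms controlled by the Moser--Trudinger inequality (Theorem \ref{MT}) and the Hardy-type Lemma \ref{hardy}, exactly as in \eqref{estimate1} and \eqref{estimate2}.

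The key point is that one gets an inequality of the form
\begin{equation*}
\|w\|_{C_\tau(H^1)} \leqslant C\,\varphi(\tau)\,\|w\|_{C_\tau(H^1)},
\end{equation*}
where $\varphi(\tau)\to 0$ as $\tau\to 0$, the smallness coming from the time factors $\tau^{1/2}$ and $\tau^{b/2}$ appearing in the estimates of $\big\|e^{\alpha(1+\varepsilon)\|u\|_{L^\infty_x}^2}-1\big\|_{L^1_\tau}$ and the analogous term for $v$. For this to work I need the exponential-time integrals to be finite, which requires knowing $u,v \in L^p_{loc}$ in time with values in $L^\infty_x$ or $C^{1/2}_x$; but a solution merely in $C([0,T),H^1)$ need not a priori lie in $L^4_{loc}(C^{1/2})$. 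To close this gap I would first argue that any $H^1$-solution is automatically in the Strichartz-type class on a small initial interval: apply \eqref{ST} to $u$ itself (not the difference), bootstrap using \eqref{estimate1}--\eqref{estimate2}, and absorb, to conclude $u \in L^4([0,\tau_0],C^{1/2})$ for some $\tau_0>0$; then $\|u\|_{L^\infty_x}$ is controlled via the log estimate \eqref{LE} and the exponential-in-time integrals are finite. Here one uses crucially that $\|\nabla u_0\|_{L^2}<1$ (hence $\|\nabla u(t)\|_{L^2}<1$ for small $t$, by continuity) so that Theorem \ref{MT} is applicable with the constraint $\|\nabla \cdot\|_{L^2}\leqslant 1$.

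Once the smallness estimate is established on $[0,\tau]$, choosing $\tau$ small enough that $C\varphi(\tau)<1$ forces $w\equiv 0$ on $[0,\tau]$. Since $\tau$ depends only on quantities that are locally bounded ($\|u\|_{C_\tau(H^1)}$, $\|v\|_{C_\tau(H^1)}$, and the local Strichartz norms, all finite by the previous paragraph and by continuity of $t\mapsto \|\nabla u(t)\|_{L^2}$ staying below $1$), a standard continuation argument propagates uniqueness: let $t^* = \sup\{t<T : u\equiv v \text{ on }[0,t]\}$; if $t^*<T$, restart the argument from time $t^*$ with datum $u(t^*)=v(t^*)$, noting $\|\nabla u(t^*)\|_{L^2}$ may no longer be $<1$ — this is the one delicate point. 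To handle it I would instead run the difference estimate on a sliding interval $[t_0, t_0+\tau]$ directly, using that the weighted exponential norms of $u(t), v(t)$ are \emph{finite and locally bounded in $t$} (which follows from $u,v\in C(H^1)$ together with the stronger Moser--Trudinger Theorem \ref{MTT}, valid under $\|u\|_{H^1}\leqslant 1$ after rescaling, or simply from $u\in L^4_{loc}(C^{1/2})$ established by the bootstrap applied around $t_0$), so the only genuinely small factor needed is the time factor $\tau^{b/2}$. I expect this local-to-global propagation, specifically verifying that the bootstrap giving $u\in L^4_{loc}(C^{1/2})$ works starting from an arbitrary time without the $\|\nabla u_0\|_{L^2}<1$ hypothesis, to be the main obstacle; it is resolved by observing that $H(u)$ (or just local mass plus $\|\nabla u\|_{L^2}$) is finite and that the critical Moser--Trudinger estimate degrades only logarithmically, so one still obtains a finite, locally bounded control on the exponential nonlinearity and hence a short-time Strichartz bound.
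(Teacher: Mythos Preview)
Your overall strategy---first show that any $C([0,T),H^1)$ solution automatically lies in the Strichartz class $L^4_{\mathrm{loc}}(C^{1/2})$ on a short initial interval, then deduce uniqueness from the contraction argument---is exactly the paper's approach (the paper packages the first step as Lemma~\ref{Uniq2}). The gap is in how you obtain that Strichartz regularity. You propose to ``apply \eqref{ST} to $u$ itself, bootstrap using \eqref{estimate1}--\eqref{estimate2}, and absorb''; but \eqref{estimate1}--\eqref{estimate2} bound the dual Strichartz norm of $f(x,u)$ in terms of $\|u\|_T$, which already contains the $L^4(C^{1/2})$ norm through the log-estimate treatment of $\big\|e^{\alpha\|u\|_{L^\infty_x}^2}-1\big\|_{L^1_t}$. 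The resulting inequality therefore has the unknown Strichartz norm on both sides, and a continuity/bootstrap argument cannot even start because that norm is not known a priori to be finite for an arbitrary $C(H^1)$ solution.

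The paper sidesteps this circularity by writing $u=v_0+V$ with $v_0=e^{it\Delta}u_0$ and using continuity to get $\sup_{[0,T_1]}\|V(t)\|_{H^1}\le\varepsilon$ on a short interval (since $V(0)=0$). One then uses $|u|^2\le a|v_0|^2+\tfrac{a}{a-1}|V|^2$ together with the identity $e^{x+y}-1=(e^x-1)(e^y-1)+(e^x-1)+(e^y-1)$ to split the exponential. The $v_0$-factors are handled because $v_0\in L^4(C^{1/2})$ by \emph{free} Strichartz (so its $L^\infty_x$ norm is already controlled) and $a>1$ can be chosen with $a\|\nabla u_0\|_{L^2}^2<1$ (this is where the hypothesis $\|\nabla u_0\|_{L^2}<1$ enters), so weighted Moser--Trudinger applies. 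The $V$-factors are controlled by the \emph{classical} Moser--Trudinger inequality using only $\|V\|_{H^1}\le\varepsilon$: the smallness of $\varepsilon$ compensates for the large Lebesgue exponent and the extra $\tfrac{a}{a-1}$, and no $L^\infty$ or $C^{1/2}$ information on $V$ is ever invoked. This yields a bound on $\|f(x,u)\|_{L^{\delta'}_{T_1}(W^{1,\rho'})}$ depending only on $\|u_0\|_{H^1}$ and $T_1$, tending to $0$ as $T_1\to 0$; Strichartz then gives $u\in L^4([0,T_\delta],W^{1,4})$ with small norm directly, with no bootstrap required. This splitting is the missing idea in your argument.
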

The proof of Theorem \ref{Uniq1} follows immediately from the following Lemma
\begin{lemma}
\label{Uniq2}
Let $T,\; \delta>0$ be positive real numbers and $u_0\in H^1(\R^2)$ such
that $\|\nabla u_0\|_{L^2}<1$. If $u\in {\mathcal C}([0,T],
H^1(\R^2))$ is a solution of \eqref{eq1} on $[0,T]$,
then there exists a time $0<T_\delta\leq T$ such that
$u\;;\;\nabla u\in L^4([0,T_\delta], L^4(\R^2))$ and

$$
\| u\|_{L^4([0,T_\delta]\times \R^2)}+ \|\nabla
u\|_{L^4([0,T_\delta]\times \R^2)}\leq\delta.
$$
\end{lemma}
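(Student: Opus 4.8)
The plan is to obtain the claimed $L^4$-regularity from the Duhamel representation of $u$ via a continuity (bootstrap) argument that recycles the nonlinear estimates of Theorem~\ref{local}. The bound on $u$ itself is immediate and independent of the equation: since $u\in C([0,T],H^1)$ and $H^1(\R^2)\hookrightarrow L^4(\R^2)$,
\[
\|u\|_{L^4([0,T']\times\R^2)}\le (T')^{1/4}\sup_{t\in[0,T]}\|u(t)\|_{L^4}\lesssim (T')^{1/4}\,\|u\|_{C_TH^1}\longrightarrow 0\quad\text{as }T'\to0,
\]
so the real content is the gradient bound $\nabla u\in L^4_{\mathrm{loc}}([0,T],L^4)$ with arbitrarily small norm on a short interval.

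First, since $\|\nabla u_0\|_{L^2}<1$ and $t\mapsto\|\nabla u(t)\|_{L^2}$ is continuous, I would fix $\theta<1$ and $0<T_1\le T$ with $\sup_{[0,T_1]}\|\nabla u(t)\|_{L^2}\le\theta$, and note $\sup_{[0,T]}\|u(t)\|_{L^2}<\infty$. For $0<T'\le T_1$ set $X(T'):=\|\nabla u\|_{L^4([0,T'],L^4)}\in[0,+\infty]$; recall $W^{1,4}(\R^2)\hookrightarrow C^{1/2}(\R^2)$, so $\|u\|_{L^4([0,T'],C^{1/2})}\lesssim (T')^{1/4}\|u\|_{C_TH^1}+X(T')$. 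On an interval where $X(T')<\infty$, the Strichartz estimate \eqref{ST} applied to $u(t)=e^{it\Delta}u_0+i\int_0^te^{i(t-s)\Delta}f(x,u(s))\,ds$, with the admissible couple $(4,4)$ on the left and the couple with dual indices $\big(\tfrac{2(1+\eps)}{1+3\eps},1+\eps\big)$ on the right (exactly as in the proof of Theorem~\ref{local}), yields
\[
X(T')\le\|\nabla e^{it\Delta}u_0\|_{L^4_{T'}L^4}+C\,\big\|\nabla f(\cdot,u)\big\|_{L^{\frac{2(1+\eps)}{1+3\eps}}([0,T'],L^{1+\eps})},
\]
and the free part tends to $0$ as $T'\to0$ since $e^{it\Delta}u_0\in L^4(\R,W^{1,4})$ by the global Strichartz estimate.

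The nonlinear term is then bounded by a line-by-line repetition of the estimates of $\mathbf{I}_1$ and $\mathbf{I}_2$ in the proof of Theorem~\ref{local}, with $v+v_0$ replaced everywhere by $u$: one uses the pointwise bounds \eqref{Loc1}--\eqref{Loc2} with $v\equiv0$ and $xe^x\le\eps^{-1}(e^{(1+\eps)x}-1)$, peels off by Hölder in $x$ a factor $\tfrac{u}{|x|^{(b+1)/2}}$ (controlled via Hardy, Lemma~\ref{hardy}) or $\nabla u$ (controlled by $\theta$), estimates the remaining weighted exponentials in $L^\infty_{T'}L^1_x$ by Moser--Trudinger (Theorem~\ref{MT}, after rescaling by $\theta<1$ and with $\eps$ taken small enough in terms of $\theta$ and $b$ so the Trudinger exponent is admissible), and controls the time integral $\big\|e^{\alpha(1+\eps)\|u(t)\|_{L^\infty_x}^2}-1\big\|_{L^1_{T'}}$ by splitting $\{\|u(t)\|_{L^\infty_x}\le1\}$ (elementary) and $\{\|u(t)\|_{L^\infty_x}>1\}$, where the log estimate \eqref{LE} (with $\mu$ small, again using $\theta<1$) produces an exponent $2(2-b)\gamma$ with $\gamma<1$, hence a bound $\lesssim (T')^{b/2}\|u\|_{L^4_{T'}C^{1/2}}^{2(2-b)\gamma}$. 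Collecting the time factors one arrives at $X(T')\le \eta(T')+C\,\kappa(T')\,\Theta\big(\|u\|_{C_{T_1}H^1},\,(T')^{1/4}\|u\|_{C_TH^1}+X(T')\big)$ with $\eta(T'),\kappa(T')\to0$ as $T'\to0$ and $\Theta$ a polynomial whose second argument enters only with powers $<4$; since $X$ is nondecreasing and, on its interval of finiteness, continuous with $X(0^+)=0$, a routine continuity argument gives $X(T_\delta)\le\delta$ for $T_\delta$ small, which with the first paragraph proves the lemma.

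The one genuinely new difficulty compared with Theorem~\ref{local} — hence the step I expect to be the main obstacle — is getting the bootstrap off the ground, i.e.\ knowing that $X(T')<\infty$ for $T'$ small: a generic $u\in C([0,T],H^1(\R^2))$ need not lie in $L^4_{\mathrm{loc}}W^{1,4}$, and for such $u$ the quantity $\|u(t)\|_{L^\infty_x}$ that is pulled out in the nonlinear estimate may be $+\infty$ on a set of positive measure, so the pull-out is only legitimate once one knows $\|u(t)\|_{C^{1/2}_x}<\infty$ for almost every $t$. Making this rigorous requires either running the argument first on a truncated/regularised nonlinearity and passing to the limit, or a maximal-interval argument on $\{T':X(T')<\infty\}$ excluding a degenerate interval; in either case the strict subcriticality $\|\nabla u_0\|_{L^2}<1$ is essential, precisely because it is what allows the small loss parameter $\eps$ in the Moser--Trudinger and log-estimate steps to be absorbed.
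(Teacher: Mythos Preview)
Your scheme and the paper's differ at exactly the point you flag as ``the one genuinely new difficulty,'' and the paper resolves it not by a bootstrap or a regularisation, but by a decomposition that makes the circularity disappear.

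Concretely, your argument applies the log estimate \eqref{LE} to $u$ itself; this requires $u(t)\in C^{1/2}(\R^2)$ for a.e.~$t$, which is precisely the regularity you are trying to establish. The fixes you sketch are not free: the maximal-interval argument on $\{T':X(T')<\infty\}$ does not exclude that this set is $\{0\}$, and a truncation/limit procedure would have to produce solutions $u_n$ that converge to the given $u$ \emph{without} invoking uniqueness (since Lemma~\ref{Uniq2} is what proves uniqueness), together with uniform $L^4W^{1,4}$ bounds --- a substantial separate argument that you have not carried out.

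The paper avoids the bootstrap entirely by writing $u=V+v_0$ with $v_0:=e^{it\Delta}u_0$ and $V(0)=0$, and then treating the two pieces of the exponential asymmetrically. Using $|V+v_0|^2\le a|v_0|^2+\tfrac{a}{a-1}|V|^2$ and the identity $e^{x+y}-1=(e^x-1)(e^y-1)+(e^x-1)+(e^y-1)$, the nonlinearity splits into factors involving only $v_0$ and factors involving only $V$. For the $v_0$-factors one may legitimately use \eqref{LE}, because the free solution satisfies $v_0\in L^4(\R,W^{1,4})\hookrightarrow L^4(\R,C^{1/2})$ \emph{a priori} by the global Strichartz estimate; for the $V$-factors one never needs $C^{1/2}$ regularity at all, since by continuity $\sup_{[0,T_1]}\|V(t)\|_{H^1}\le\varepsilon$ for $T_1$ small, and then $e^{C|V|^2}-1$ is bounded directly in $L^p_x$ by the classical Moser--Trudinger inequality. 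This yields a finite dual-Strichartz bound on $\nabla f(x,u)$ that depends only on quantities known in advance, so Strichartz gives $V\in L^4([0,T_1],W^{1,4})$ with norm tending to $0$ as $T_1\to0$, and the lemma follows.

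In short: the missing idea in your proposal is to put the $L^\infty_x$/$C^{1/2}$ burden on the free part $v_0$ (where it is available for free) and to handle the remainder $V$ with Moser--Trudinger alone via its smallness in $H^1$. Once you do this, no continuity/bootstrap argument is needed.
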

\begin{proof}

Denote by $V:=u-v_0$ with $v_0:=e^{it\Delta}u_0$. Note that $V$
satisfies

\begin{eqnarray}
\nonumber
i\partial_t V+\Delta V= f(x,V+v_0).
\end{eqnarray}
From Strichartz inequalities, to prove that $V$ and
$\nabla V$ are in $L^4_{t,x}$, it is sufficient to estimate
$\nabla^j\Big[f(x,V+v_0)\Big]$ in
the dual Strichartz norm $\|\cdot\|_{L^{\frac{2(1+\varepsilon)}{1+3\varepsilon}}_T(L^{1+\varepsilon}_x)}$ with $j=0,1$.\\
Take $\varepsilon>0$ (to be chosen later suitably). By continuity of $t\rightarrow V(t,\cdot)$, one can choose a
time $0<T_1\leq T$ such that

\begin{equation}
\label{small}
\sup_{[0,T_1]}\|V(t,\cdot)\|_{H^1}\leq\varepsilon.
\end{equation}
Observe that

$$
|V+v_0|^2\leq a |v_0|^2 + \frac{a}{a-1} |V|^2; \qquad a>1,
$$

$$
e^{x+y}-1=(e^x -1)(e^y -1)+(e^x -1)+(e^y -1); \qquad x,y \in \R,
$$
and
$$
xe^{x}\leq \frac{e^{(1+\varepsilon)x}-1}{\varepsilon}; \qquad x\geqslant0 \, , \, \varepsilon>0.
$$
In the sequel, we will only estimate the term with derivative, the other case is easier.

Applying estimate \eqref{Loc2} with $u=V+v_0$ and $v=0$, one gets
\begin{eqnarray*}
|\nabla \left(f(x,V+v_0)\right)|&\leq& C_{\varepsilon, b } \bigg\{ \frac{|V+v_0|}{|x|^{b+1}}\bigg({\rm e}^{\alpha(1+\varepsilon)|V+v_0|^2}-1\bigg)\\
\nonumber &+&   \frac{|\nabla (V+v_0)|}{|x|^{b}}\left({\rm e}^{\alpha(1+\varepsilon)|V+v_0|^2}-1\right)\bigg\}.\\
\nonumber
\end{eqnarray*}
The terms figuring in the right hand side of the last inequality are treated in exactly the same manner. We will only deal with the second one. Taking into account the above observations, we get, for $a>1$ to be chosen later conveniently
\begin{eqnarray}
\label{Part1}
\nonumber
\frac{|\nabla (V+v_0)|}{|x|^{b}}\left({\rm e}^{\alpha(1+\varepsilon)|V+v_0|^2}-1\right)&\lesssim&
|\nabla(V+v_0)|\bigg(\frac {e^{\alpha(1+\varepsilon)a|v_0|^2}-1}{|x|^{b}}\bigg)\bigg(e^{\alpha(1+\varepsilon)\frac{a}{a-1}|V|^2}-1\bigg)\\&+&
|\nabla(V+v_0)|\bigg(\frac {e^{\alpha(1+\varepsilon)a|v_0|^2}-1}{|x|^{b}}\bigg)\\&+& |\nabla(V+v_0)|\bigg(\frac {e^{\alpha(1+\varepsilon)\frac{a}{a-1}|V|^2}-1}{|x|^{b}}\bigg).
\nonumber
\end{eqnarray}
We will estimate the two first terms of the RHS of inequality \eqref{Part1}, the third one can be treated analogously. For the first term we have

\begin{equation*}
\bigg\||\nabla(V+v_0)|\bigg(\frac {e^{\alpha(1+\varepsilon)a|v_0|^2}-1}{|x|^{b}}\bigg)\bigg(e^{\alpha(1+\varepsilon)\frac{a}{a-1}|V|^2}-1\bigg)\bigg\|_{L^{\frac{2(1+\varepsilon)}{1+3\varepsilon}}_{T_1}(L^{1+\varepsilon}_x)}
\end{equation*}
\begin{equation*}
\leqslant \, \|\nabla (V+v_0)\|_{L_{T_1}^{\infty}
(L^2_x)} \bigg\|\bigg(\frac {e^{\alpha(1+\varepsilon)a|v_0|^2}-1}{|x|^{b}}\bigg)\bigg(e^{\alpha(1+\varepsilon)\frac{a}{a-1}|V|^2}-1\bigg)\bigg\|_{L_{T_1}^{\frac{2(1+\varepsilon)}{1+3\varepsilon}}(L^{\frac{2(1+\varepsilon)}{1-\varepsilon}}_x)}
\end{equation*}
\begin{equation*}
\leqslant \, \|\nabla (V+v_0)\|_{L_{T_1}^{\infty}
(L^2_x)} \bigg\|\frac {e^{\alpha(1+\varepsilon)a|v_0|^2}-1}{|x|^{b}}\bigg\|_{L_{T_1}^{\frac{2(1+\varepsilon)}{1+3\varepsilon}}(L^{\frac{4(1+\varepsilon)}{2-3\varepsilon}}_x)}
\bigg\|e^{\alpha(1+\varepsilon)\frac{a}{a-1}|V|^2}-1\bigg\|_{L_{T_1}^{\infty}(L^{\frac{4(1+\varepsilon)}{\varepsilon}}_x)}.
\end{equation*}
 Applying the H\"older inequality we obtain
 $$
 \bigg\|\frac {e^{\alpha(1+\varepsilon)a|v_0|^2}-1}{|x|^{b}}\bigg\|_{L_{T_1}^{\frac{2(1+\varepsilon)}{1+3\varepsilon}}(L^{\frac{4(1+\varepsilon)}{2-3\varepsilon}}_x)} \leqslant \bigg\|\frac{e^{\alpha(1+\varepsilon)a| v_0|^2}-1}{| x|^{\frac{4b(1+\varepsilon)}{2-3\varepsilon}}}\bigg\|_{L_{T_1}^{\infty}(L^1_x)}^{\frac{2-3\varepsilon}{4(1+\varepsilon)}}\bigg\| e^{\alpha(1+\varepsilon)a\| v_0\|_{L^{\infty}}^{2}}\bigg\|_{L_{T_1}^{\frac{2+7\varepsilon}{2(1+3\varepsilon)}}}^{\frac{2+7\varepsilon}{4(1+\varepsilon)}}.
$$

For the second term of the RHS of \eqref{Part1} one gets
\begin{equation*}
\bigg\||\nabla(V+v_0)|\bigg(\frac {e^{\alpha(1+\varepsilon)a|v_0|^2}-1}{|x|^{b}}\bigg)\bigg\|_{L^{\frac{2(1+\varepsilon)}{1+3\varepsilon}}_{T_1}(L^{1+\varepsilon}_x)}\leqslant \|\nabla (V+v_0)\|_{L_{T_1}^{\infty}
(L^2_x)} \bigg\|\frac {e^{\alpha(1+\varepsilon)a|v_0|^2}-1}{|x|^{b}}\bigg\|_{L^{\frac{2(1+\varepsilon)}{1+3\varepsilon}}_{T_1}(L^{\frac{2(1+\varepsilon)}{1-\varepsilon}}_x)}.
\end{equation*}

Therefore, we need to estimate the following three terms:

$$
{\mathcal J}_1(t):=\bigg\|\frac{e^{\alpha(1+\varepsilon)a| v_0|^2}-1}{| x|^{\frac{4b(1+\varepsilon)}{2-3\varepsilon}}}\bigg\|_{L_{T_1}^{\infty}(L^1_x)},
$$

$$
{\mathcal J}_2(t):=\bigg\|\frac {e^{\alpha(1+\varepsilon)a|v_0|^2}-1}{|x|^{b}}\bigg\|_{L^{\frac{2(1+\varepsilon)}{1+3\varepsilon}}_{T_1}(L^{\frac{2(1+\varepsilon)}{1-\varepsilon}}_x)},
$$
and

$$
{\mathcal J}_3(t):=\bigg\|e^{\alpha(1+\varepsilon)\frac{a}{a-1}|V|^2}-1\bigg\|_{L_{T_1}^{\infty}(L^{\frac{4(1+\varepsilon)}{\varepsilon}}_x)}.
$$
Take $a>1$ such that $a \|\nabla u_0\|_{L^2}<1$. This is possible since $\|\nabla u_0\|_{L^2}<1$. On the other hand, since $\frac{4b(1+\varepsilon)}{2-3\varepsilon}\rightarrow 2b$ as $\varepsilon\rightarrow0^{+}$, one can find $\varepsilon>0$ such that $\frac{4b(1+\varepsilon)}{2-3\varepsilon}<2$. With these choices for $a$ and $\varepsilon$ we can apply theorem \ref{MT} and lemma \ref{hardy} successively to obtain
$$
\int_{\R^2} \frac{e^{\alpha(1+\varepsilon)a| v_0|^2(x)}-1}{| x|^{\frac{4b(1+\varepsilon)}{2-3\varepsilon}}} \, dx  \leqslant C(a,b,\varepsilon) \|v_0\|_{H^1}^2.
$$
The same method applies for $\mathcal{J}_2$. For $\mathcal{J}_3$, taking into consideration \eqref{small},  the classical Moser-Trudinger inequality applies for suitable $a$ and $\varepsilon$ and yields
$$
\mathcal{J}_3 \leqslant C(a,\varepsilon).
$$
Therefore
$$
\bigg\||\nabla(V+v_0)|\bigg(\frac {e^{\alpha(1+\varepsilon)a|v_0|^2}-1}{|x|^{b}}\bigg)\bigg(e^{\alpha(1+\varepsilon)\frac{a}{a-1}|V|^2}-1\bigg)\bigg\|_{L^{\frac{2(1+\varepsilon)}{1+3\varepsilon}}_{T_1}(L^{1+\varepsilon}_x)} \leqslant C(a, b, \varepsilon, T_1),
$$
with $C(a, b, \varepsilon, S) \, \underset{S \rightarrow 0}{\longrightarrow}\, 0$. Hence choosing $T_1$ small enough we derive the desired estimate.
\end{proof}

Let us mention a remark about the time of local existence.

\begin{remark}\label{rem}

The time $T$ of local existence constructed in theorem \ref{local} depends only on the size of the data $u_0$ and $b$.
However, when $\| \nabla u_0\|_{L^2}<1-\varrho,$ the lifespan depends only on $\varrho$, $b$ and $\| u_0\|_{L^2}.$
\end{remark}

\subsection{Global existence}
In this section we prove a global well-posedness result in the defocusing case when $H(u_0)\leqslant 1$ (subcritical and critical regimes).\\

We start with the subcritical case where the maximal solution $u$ of $(P)$ extends globally in time by a rather simple argument. This argument do not apply in the critical case, where the situation is more delicate. Indeed, the total energy can be concentrated in the $\| \nabla u(t)\|_{L^2}$-part of the Hamiltonian. We show that such a phenomenon can not occur.\\
\subsubsection{\sf The subcritical case}\quad\\
The assumption $H(u_0)<1$ implies that $\| \nabla u_0\|_{L^2}<1$. Hence, by a fixed point argument as in the proof of local existence, the problem $(P)$ has a unique maximal solution $u$ on the time interval $[0, T^*)$ where $0<T^*\leqslant \infty$  is the maximal lifespan.\\
We argue by contradiction. Assume that $T^*$ is finite. Then, we have
$$\sup_{t\in[0,T^*)}\| \nabla u(t)\|_{L^2}\leqslant H(u_0)<1.$$
Consider for $0<t_0<T^*$ the following Cauchy problem:
$$\left \{\begin{array}{l}
i\partial_t v+\Delta v = f(x,v),\\
v(t_0) = u(t_0)
\end {array}
 \right.$$\\
By the local existence theory, we can see that there exists a non-negative $\tau$ and a unique solution $v$ to our problem on the interval $[t_0,T]$ where $T=t_0+\tau$. Using Remark \ref{rem} and the conservation laws, we see that $\tau$ depends only on $\rho:=\frac{1-H(u_0)}{2}$, $b$ and $\|u_0\|_{L^2}$.  Choosing $t_0$ close enough to $T^*$ such that $T^*-t_0<\tau$, one can extend $u$ beyond the maximal time $T^*$. This yields to a contradiction.\\

\subsubsection{\sf The critical case}\quad\\
Let $u$ be the maximal solution of $(P)$ defined on the interval $[0,T^*)$, where $0<T^*\leqslant \infty$   is the lifespan of $u$. Since $H(u_0)=1$, a concentration phenomena can occur, that is
$$
\limsup_{t\longrightarrow T^*}\| \nabla u(t)\|_{L^2}=1.
$$
Therefore, the argument used above in the subcritical case does not apply. We show here that such a concentration phenomena does not happen.
Assume that $T^*$ is finite and let us derive a contradiction. Before doing so, We state and prove the following proposition.
\begin{proposition}
\label{limits}
The maximal solution u satisfies:
\begin{itemize}
 \item [$(i)$] $ \displaystyle\limsup_{t\longrightarrow T^* }\| \nabla u(t)\|_{L^2} =1$.
 \item [$(ii)$] There exists a sequence of times $\{t_n\}_{n=1}^{\infty}$ in $[0,T^*]$ converging to $T^* $such that
 $$
 \int_{\R^2}\frac{| u(t_n,x)|^4}{| x|^a}dx  \underset{n\to \infty}{\longrightarrow}0.
 $$
 \end{itemize}
\end{proposition}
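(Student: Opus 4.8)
The plan is to obtain $(i)$ from the conservation of the Hamiltonian \eqref{H} together with the uniform local existence time recorded in Remark \ref{rem}, and then to deduce $(ii)$ from $(i)$ through an elementary convexity estimate on the exponential.

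For $(i)$, I would first record the a priori upper bound: since $e^s-1-s\geqslant 0$ for every $s\geqslant 0$, the identity $H(u(t))=H(u_0)=1$ coming from \eqref{H} forces $\|\nabla u(t)\|_{L^2}^2\leqslant 1$ for all $t\in[0,T^*)$, so that $\limsup_{t\to T^*}\|\nabla u(t)\|_{L^2}\leqslant 1$. To upgrade this to an equality I would argue by contradiction: if the $\limsup$ were some $\ell<1$, there would exist $\varrho\in(0,1)$ and $t_0\in[0,T^*)$ with $\|\nabla u(t)\|_{L^2}<1-\varrho$ for all $t\in[t_0,T^*)$. By mass conservation \eqref{M} one has $\|u(t)\|_{L^2}=\|u_0\|_{L^2}$, so Remark \ref{rem} supplies a lifespan $\tau=\tau(\varrho,b,\|u_0\|_{L^2})>0$, \emph{independent of the base time}, for the Cauchy problem started at any $t_1\in[t_0,T^*)$ with data $u(t_1)$. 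Picking $t_1$ with $T^*-t_1<\tau$ and invoking the unconditional uniqueness of Theorem \ref{Uniq1}, this solution prolongs $u$ strictly past $T^*$, contradicting maximality. This is precisely the continuation mechanism already used above in the subcritical case.

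For $(ii)$, using $(i)$ I would choose a sequence $t_n\to T^*$ with $\|\nabla u(t_n)\|_{L^2}^2\to 1$. Rewriting \eqref{H},
$$
\frac{1}{(4-2b)\pi}\int_{\R^2}\Big(e^{(4-2b)\pi|u(t_n)|^2}-1-(4-2b)\pi|u(t_n)|^2\Big)\frac{dx}{|x|^b}=1-\|\nabla u(t_n)\|_{L^2}^2\;\underset{n\to\infty}{\longrightarrow}\;0 .
$$
The Taylor expansion of the exponential gives the pointwise bound $e^s-1-s\geqslant \tfrac{s^2}{2}$ for $s\geqslant 0$; applied with $s=(4-2b)\pi|u(t_n,x)|^2\geqslant 0$ it shows that the integrand above is at least $\tfrac{((4-2b)\pi)^2}{2}\,|u(t_n)|^4/|x|^b$. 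Hence
$$
\int_{\R^2}\frac{|u(t_n,x)|^4}{|x|^b}\,dx\leqslant \frac{2}{(4-2b)\pi}\big(1-\|\nabla u(t_n)\|_{L^2}^2\big)\;\underset{n\to\infty}{\longrightarrow}\;0 ,
$$
which is the stated conclusion (the exponent $a$ in the statement being $b$).

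There is no real analytic difficulty in the proposition itself; the only point requiring care is the logic of the contradiction in $(i)$, where one must invoke the \emph{uniform} lifespan of Remark \ref{rem} (available precisely because the gradient stays below $1$ by a fixed margin) rather than the data-dependent lifespan of Theorem \ref{local}. The genuine work comes afterwards: turning the non-concentration fact $(ii)$, via Lemma \ref{LG} and the weighted Moser--Trudinger inequalities (Theorems \ref{MT} and \ref{MTT}), into a contradiction with the assumption $T^*<\infty$, thereby closing the critical case.
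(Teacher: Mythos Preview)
Your proof is correct and follows essentially the same approach as the paper: the same contradiction via Remark~\ref{rem} for $(i)$, and the same Taylor lower bound $e^s-1-s\geqslant s^2/2$ (equivalently the paper's \eqref{estimate3}) combined with the sequence from $(i)$ for $(ii)$. The only cosmetic difference is that in $(i)$ you use the eventual-sup characterization of $\limsup$ to get $\|\nabla u(t)\|_{L^2}<1-\varrho$ for \emph{all} $t$ near $T^*$, whereas the paper extracts a sequence $t_n$ with $\|\nabla u(t_n)\|_{L^2}\to L$ and restarts from one of those; both are standard and your formulation is arguably cleaner.
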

\begin{proof}
Recall that  $$H(u(t)):=  \int_{\R^2}|\nabla u(t)|^2dx+ \frac{1}{\alpha} \int_{\R^2}\left( (e^{\alpha| u(t)|^2} -1-\alpha | u(t)|^2) \right)\frac{dx}{| x|^a}.$$
Hence, for all $0\leqslant t<T^*$ we have $$\|\nabla u(t)\|_{L^2}^2\leqslant H(u(t))=1.$$
 Therefore
 $$\limsup_{t\longrightarrow T^* }\|\nabla u(t)\|_{L^2}\leqslant1.$$
Set $L:=\limsup_{t\longrightarrow T^* }\|\nabla u(t)\|_{L^2}$ and assume $L<1$.\\
The definition of the superior limit ensures the existence of a sequence of times $\{t_n\}_{n=1}^{\infty}$ in $[0,T^*]$ converging to $T^*$ such that
$$
\|\nabla u(t_n)\|_{L^2}  \underset{n\to \infty}{\longrightarrow}L.
$$
Thus, there exists $N=N(L) \in \N ^*$  such that, for all $n\geqslant N$ we have
$$
\|\nabla u(t_n)\|_{L^2} \leqslant L+\frac{1-L}{2}=1-\frac{1-L}{2}.
$$
Let $n\geqslant N$. From the local theory, one can construct a non-negative $\tau$ (depending only on $\frac{1-L}{2}$, $b$ and $\|u_0\|_{L^2}$) and a unique solution
$v$ on $[t_n,t_n+\tau]$ of the problem $$\left \{\begin{array}{l}
i\partial_t v+\Delta v =f(x,v),\\
v(t_n,x) = u(t_n,x) .
\end {array}
 \right.$$
Hence, choosing $n\geqslant N$ such that $T^*-t_n<\tau$, we are able to extend the solution $u$ beyond $T^*$, which is absurd.\\

It remains now to establish $(ii)$. Note that, for all $x\geqslant 0$
\begin{equation}
\label{estimate3}
\frac{\alpha}{2} x^4 \leqslant  \frac{e^{\alpha x^2} -1}{\alpha}  - x^2.
\end{equation}
Applying \eqref{estimate3} with $x=|u|$, integrating over $\R^2$ and using the fact that $\frac{2}{\alpha}\leqslant 1$, we obtain
\begin{equation}
\label{estimate4}
\int_{\R^2} \frac{| u(t,x)|^4}{| x|^b} \,dx+\| \nabla u(t)\|_{L^2}^2 \leqslant 1, \qquad t \in [0,T^*).
\end{equation}
From $(i)$ we know that there exists a sequence of times $\{t_n\}_{n=1}^{\infty}$ in $[0,T^*]$ converging to $T^*$ such that
$$
\|\nabla u(t_n)\|_{L^2}  \underset{n\to \infty}{\longrightarrow}1.
$$
By plugging this sequence into \eqref{estimate4}, one achieves the proof of the second assertion.
 \end{proof}

Now we are in position to prove the global existence in the critical regime. Let $\{t_n\}_{n=1}^{\infty}$ be the sequence of times given by the second point of  proposition \ref{limits}. Lemma \ref{LG} and the Cauchy-Schwarz inequality yield
\begin{eqnarray*}
\int_{B(S)} | u_0(x)|^2 dx
&\leq&C(b,S, S') \bigg(\int_{\R^2}\frac{ |u(t_n,x)|^4}{| x|^a} \, dx\bigg)^{\frac{1}{2}}+ \frac{C(\|u_0\|_{L^2})}{S'} t_n.
\end{eqnarray*}
Taking the limit in $n$, we find that
$$\int_{B(S)} | u_0(x)|^2 dx\leqslant \frac{C(\|u_0\|_{L^2})}{S'} T^*.$$
Let $S'$ tend to $\infty$. We get
$$\int_{B(S)} | u_0(x)|^2 dx=0,$$
which is a contradiction (unless $u_0\equiv 0$). This ends the proof of Theorem \ref{EG}.






\section{Appendix}
\begin{definition}
Let $E$ be a bounded measurable set of $\mathbb{R}^{d}$. Let $u: E\rightarrow \mathbb{R}$ be a measurable function. The distribution function of $u$ is given by
$$
\mu_{u}(t):=|\{u>t\}|, \qquad t \in \R.
$$
Here, the notation $|\Omega|$ stands for the $d$-dimensional Lebesgue measure of $\Omega$.\\
The (unidimensional) decreasing rearrangement of $u$, denoted by $u^{\#}$, is defined on $[0,|E|]$ by
\[u^{\#}(s) = \left\{
\begin{array}{l l}
  \textit{ess sup}(u) & \quad s=0,\\
  \textit{inf} \, {\{t; \, \mu_{u}(t)<s \}}& \quad s>0.\\ \end{array} \right. \]
  Here "$\textit{ess sup}(u)$" is the essential supremum of $u$.
 \end{definition}
For the rest of this section, we denote by $E^*$ the open ball centered at the origin and having the same measure as $E$ and by $\omega_d$ the volume of the unit ball in $\mathbb{R}^{d}$.
\begin{definition}
Let $E$ be a bounded domain of $\mathbb{R}^{d}$ and $u: E\rightarrow \mathbb{R}$ a measurable function. The Schwarz symmetrization (or the spherically symmetric and decreasing rearrangement) of $u$ is the function $u^*: E^*\rightarrow \mathbb{R}$ defined by
$$
u^*(x)=u^{\#}(\omega_d |x|^d), \qquad x \in E^*.
$$
\end{definition}
The Schwarz symmetrization $u^*$ of $u$ enjoys several interesting properties:\\
\begin{itemize}
\item It is a radially symmetric and decreasing function.\\
\item $u$ and $u^*$ are equi-measurable. That is, they have the same distribution function.\\
\item It has the same $L^p$-norm as $u$. More generally, if $F: \mathbb{R}\rightarrow \mathbb{R}$ is a Borel measurable function such that either $F\geqslant0$ or $F(u) \in L^1(E)$, then
$$
\int_{E^*}F(u^*)(x) \, dx=\int_{E}F(u)(x) \, dx.
$$
\item If $F: \mathbb{R}\rightarrow \mathbb{R}$ is a non-decreasing function, then $(F(u))^*=F(u^*)$.\\
\end{itemize}
The following two theorems are relevant for us.
\begin{theorem} [\sf Hardy-Littlewood] \quad \\
Let $f \in L^p(E)$ and $g \in L^q(E)$ with $1\leqslant p, q \leqslant \infty$ such that $\frac{1}{p}+\frac{1}{q}=1$. Then
$$
\int_{E} f(x) g(x) \, dx \leqslant \int_{E^*} f^*(x) g^*(x) \, dx.
$$
\end{theorem}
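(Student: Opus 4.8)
The plan is to prove the inequality via the layer-cake (Cavalieri) representation together with the equimeasurability of a function and its Schwarz symmetrization recorded above. It suffices to treat nonnegative $f$ and $g$: this is the only case used below (in the proof of Lemma~\ref{hardy} one has $f=F(v)\ge 0$ and $g=\omega\ge 0$), and the signed case follows from it by a routine decomposition into positive and negative parts. So assume $f\ge 0$ and $g\ge 0$ on $E$. Writing $f(x)=\int_0^\infty \chi_{\{f>s\}}(x)\,ds$ and $g(x)=\int_0^\infty \chi_{\{g>t\}}(x)\,dt$ for a.e.\ $x$, Tonelli's theorem (everything being nonnegative and measurable) gives
\[
\int_E f(x)\,g(x)\,dx=\int_0^\infty\!\!\int_0^\infty \big|\{f>s\}\cap\{g>t\}\big|\,ds\,dt,
\]
and since the intersection sits inside each factor,
\[
\big|\{f>s\}\cap\{g>t\}\big|\le \min\big(\mu_f(s),\mu_g(t)\big).
\]

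The same layer-cake computation for the nonnegative function $f^*g^*$ on $E^*$ yields
\[
\int_{E^*} f^*(x)\,g^*(x)\,dx=\int_0^\infty\!\!\int_0^\infty \big|\{f^*>s\}\cap\{g^*>t\}\big|\,ds\,dt .
\]
Here the crucial point is that, by the very construction of the Schwarz symmetrization, $\{f^*>s\}$ and $\{g^*>t\}$ are balls centered at the origin; two such balls are nested, so
\[
\big|\{f^*>s\}\cap\{g^*>t\}\big|=\min\big(|\{f^*>s\}|,|\{g^*>t\}|\big)=\min\big(\mu_f(s),\mu_g(t)\big),
\]
the last equality because $f,f^*$ (resp.\ $g,g^*$) are equimeasurable. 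Comparing the three displays gives $\int_E fg\,dx\le\int_{E^*}f^*g^*\,dx$, which is the assertion.

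I expect the only real work here to be bookkeeping rather than substance: one has to check joint measurability of $(x,s)\mapsto\chi_{\{f>s\}}(x)$ so that Tonelli applies, and verify that the decreasing rearrangement as defined above actually produces super-level sets $\{f^*>s\}$ that are balls of measure $\mu_f(s)$ for a.e.\ $s$ — this is exactly where the convention $u^*(x)=u^\#(\omega_d|x|^d)$ and the monotonicity of $u^\#$ are used. Both are standard properties of rearrangements; with them in hand the argument above is complete.
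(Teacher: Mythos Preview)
Your layer-cake argument is correct and is the standard proof of the Hardy--Littlewood rearrangement inequality. Note, however, that the paper does not actually prove this theorem: it simply states it in the appendix and refers the reader to Kesavan's book \cite{Kesaven} for the proof, so there is no ``paper's own proof'' to compare against. Your argument is essentially the one found there. One small caveat: the remark that the signed case follows by decomposing into positive and negative parts is not quite right, since $(f^+-f^-)^*\neq (f^+)^*-(f^-)^*$ in general; but as you correctly observe, only the nonnegative case is used in Lemma~\ref{hardy}, so this does not matter for the paper.
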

\begin{theorem}[\sf Polya-Szeg\"{o}] \quad \\
Let $1\leqslant p < \infty$. Let $E$ be a bounded domain of $\mathbb{R}^{d}$ and $u \in W^{1,p}_{0}(E)$ such that $u \geqslant 0 $. Then
$$
\int_{E^*} |\nabla u^*|^p \, dx \leqslant \int_{E} |\nabla u|^p \, dx.
$$
In particular, $u^* \in W^{1,p}_{0}(E^*)$.
\end{theorem}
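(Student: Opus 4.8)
The plan is to reduce to smooth functions and then run the classical argument combining the coarea formula with the isoperimetric inequality. First I would note that it suffices to prove the gradient inequality for $u\in C_c^\infty(E)$ with $u\geqslant 0$: given a general $u\in W^{1,p}_0(E)$, approximate it in $W^{1,p}$ by nonnegative $u_n\in C_c^\infty(E)$; since symmetrization preserves the $L^p$-norm and is non-expansive on $L^p$, the sequence $(u_n^*)$ is bounded in $W^{1,p}(E^*)$ and $u_n^*\to u^*$ in $L^p(E^*)$, hence $u_n^*\rightharpoonup u^*$ weakly in $W^{1,p}(E^*)$; weak lower semicontinuity of $v\mapsto\|\nabla v\|_{L^p}$ then upgrades the smooth-case bound $\|\nabla u_n^*\|_{L^p}\leqslant\|\nabla u_n\|_{L^p}$ to $\|\nabla u^*\|_{L^p}\leqslant\|\nabla u\|_{L^p}$. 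Moreover $u^*\in W^{1,p}_0(E^*)$ because $u^*$ is supported in $\overline{E^*}$ (indeed $|\{u^*>0\}|=|\{u>0\}|\leqslant|E|=|E^*|$).

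For $u\in C_c^\infty(E)$ with $u\geqslant 0$, set $\mu(t):=|\{u>t\}|$, which is the common distribution function of $u$ and $u^*$. By Sard's theorem, for a.e. $t>0$ the level set $\{u=t\}$ is a smooth compact hypersurface equal to $\partial\{u>t\}$, and the coarea formula gives
\[
-\mu'(t)=\int_{\{u=t\}}\frac{d\mathcal{H}^{d-1}}{|\nabla u|},\qquad
\int_E|\nabla u|^p\,dx=\int_0^\infty\Big(\int_{\{u=t\}}|\nabla u|^{p-1}\,d\mathcal{H}^{d-1}\Big)\,dt.
\]
For $1<p<\infty$, Hölder's inequality on $\{u=t\}$ applied to the splitting $1=|\nabla u|^{\frac{p-1}{p}}\cdot|\nabla u|^{-\frac{p-1}{p}}$ yields
\[
\mathcal{H}^{d-1}(\{u=t\})^p\leqslant\Big(\int_{\{u=t\}}|\nabla u|^{p-1}\,d\mathcal{H}^{d-1}\Big)(-\mu'(t))^{p-1},
\]
and combining this with the isoperimetric inequality $\mathcal{H}^{d-1}(\partial\{u>t\})\geqslant d\,\omega_d^{1/d}\,\mu(t)^{\frac{d-1}{d}}$ I obtain
\[
\int_E|\nabla u|^p\,dx\;\geqslant\;\int_0^\infty\frac{\big(d\,\omega_d^{1/d}\big)^p\,\mu(t)^{\frac{p(d-1)}{d}}}{(-\mu'(t))^{p-1}}\,dt.
\]
(For $p=1$ this is simply the coarea formula combined with the isoperimetric inequality, with no Hölder step.)

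Next I would run the identical computation for $u^*$, observing that every inequality now becomes an equality: the level sets $\{u^*=t\}$ are concentric spheres, so the isoperimetric inequality is saturated; $|\nabla u^*|$ is constant on each such sphere, so Hölder is saturated; and $u^*$ has the same distribution function $\mu$. Thus the right-hand side of the last display equals $\int_{E^*}|\nabla u^*|^p\,dx$ exactly (equivalently, using $u^*(x)=u^{\#}(\omega_d|x|^d)$, a direct change of variables gives $\int_{E^*}|\nabla u^*|^p\,dx=\int_0^\infty(d\,\omega_d^{1/d})^p\,\mu(t)^{\frac{p(d-1)}{d}}(-\mu'(t))^{-(p-1)}\,dt$). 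Chaining the two relations gives $\int_{E^*}|\nabla u^*|^p\,dx\leqslant\int_E|\nabla u|^p\,dx$, which is the claim.

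The step I expect to be the main obstacle is making these level-set manipulations rigorous, namely controlling the contribution of the flat set $\{\nabla u=0\}$ and of the critical values of $u$, where $\{u=t\}$ need not be a hypersurface and one only has $-\mu'(t)\geqslant\int_{\{u=t\}}|\nabla u|^{-1}\,d\mathcal{H}^{d-1}$. Working first with $u\in C_c^\infty(E)$ and invoking Sard's theorem confines these pathologies to a $t$-null set, after which the approximation argument of the first paragraph transfers the inequality to all of $W^{1,p}_0(E)$. An alternative worth recording, which sidesteps the isoperimetric inequality entirely, is polarization: an elementary pointwise inequality shows that each two-point rearrangement does not increase $\int|\nabla u|^p$, and $u^*$ is an $L^p$-limit of iterated polarizations of $u$, so the conclusion follows again by lower semicontinuity.
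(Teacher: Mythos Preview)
The paper does not actually prove the Polya--Szeg\"{o} theorem: it states the result in the Appendix and refers the reader to Kesavan's book for the proof. Your argument is the classical coarea/isoperimetric proof, which is essentially what one finds in that reference, so in that sense you are aligned with what the paper invokes.

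Your sketch is correct in outline; the only point worth flagging is the approximation step when $p=1$, where bounded sequences in $W^{1,1}$ need not be weakly relatively compact in $W^{1,1}$, so the weak-limit argument as written requires a detour through $BV$ or a direct $L^1$ lower-semicontinuity statement for the total variation. For $1<p<\infty$ your reflexivity/lower-semicontinuity argument is fine, and the coarea--H\"older--isoperimetric chain together with the observation that every inequality saturates for $u^*$ is exactly the standard route. The polarization alternative you mention is also valid and is the other textbook approach.
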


For proofs of these results as well as the properties mentioned above the reader is referred to the book of S. Kesaven \cite{Kesaven} (chapters 1 and 2).\\

Now we state and a prove a result used in the proof of lemma \ref{hardy}.
\begin{lemma} [\sf Strauss radial lemma]\quad\\
Let $2\leqslant p<\infty$. There exists $C_p>0$ such that for all $u\in H^1_{\textit{rad}}(\R^2)$,
\begin{eqnarray}\label{rttt}
| u(x)|\leqslant \frac{C_p}{r^{\frac{2}{2+p}}}\| u\|_{H^1(\R^2)},
\end{eqnarray}
where $r=| x|$.
\end{lemma}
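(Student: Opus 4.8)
The plan is to run the classical Strauss radial computation, but calibrated so that the Sobolev embedding $H^1(\R^2)\hookrightarrow L^p(\R^2)$ — which holds \emph{precisely} because $2\le p<\infty$ — produces the exponent $\tfrac{2}{p+2}$. First I would reduce to $u\in C_c^\infty(\R^2)$ radial, a dense subclass of $H^1_{\mathrm{rad}}(\R^2)$; for such $u$ one has $u(s)\to0$ as $s\to\infty$, which legitimizes the fundamental theorem of calculus applied to $s\mapsto |u(s)|^q$. Set
$$
q:=\frac{p+2}{2},\qquad\text{so that}\qquad q\ge 2\quad\text{and}\quad 2(q-1)=p .
$$
Writing $u(r)=u(|x|)$ and using that $s\mapsto|u(s)|^q$ is $C^1$ with $\big|\tfrac{d}{ds}|u(s)|^q\big|\le q\,|u(s)|^{q-1}|u'(s)|$, I get
$$
|u(r)|^q=-\int_r^\infty \frac{d}{ds}|u(s)|^q\,ds\ \le\ q\int_r^\infty |u(s)|^{q-1}\,|u'(s)|\,ds .
$$

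Since $s\ge r$ on the domain of integration, inserting the factor $1\le s/r$ and then applying the Cauchy–Schwarz inequality with respect to the measure $s\,ds$ yields
$$
|u(r)|^q\ \le\ \frac{q}{r}\int_r^\infty |u(s)|^{q-1}|u'(s)|\,s\,ds\ \le\ \frac{q}{r}\Big(\int_0^\infty |u(s)|^{2(q-1)} s\,ds\Big)^{1/2}\Big(\int_0^\infty |u'(s)|^2 s\,ds\Big)^{1/2}.
$$
Passing back to $\R^2$ in polar coordinates, $\int_0^\infty|u(s)|^{2(q-1)}s\,ds=\frac{1}{2\pi}\|u\|_{L^p}^{p}$ (using $2(q-1)=p$) and $\int_0^\infty|u'(s)|^2 s\,ds=\frac{1}{2\pi}\|\nabla u\|_{L^2}^2$. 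By the Sobolev embedding there is $C_p>0$ with $\|u\|_{L^p(\R^2)}\le C_p\|u\|_{H^1(\R^2)}$, whence
$$
|u(r)|^{\frac{p+2}{2}}\ \le\ \frac{C_p}{r}\,\|u\|_{H^1}^{\frac{p}{2}}\,\|\nabla u\|_{L^2}\ \le\ \frac{C_p}{r}\,\|u\|_{H^1}^{\frac{p+2}{2}},
$$
which is exactly $|u(x)|\le C_p\, r^{-\frac{2}{p+2}}\|u\|_{H^1}$ after renaming the constant. To remove the smoothness assumption, given $u\in H^1_{\mathrm{rad}}(\R^2)$ I would pick radial $u_n\in C_c^\infty$ with $u_n\to u$ in $H^1$; applying the estimate to $u_n-u_m$ shows $(u_n)$ is uniformly Cauchy on every annulus $\{|x|\ge\delta\}$, so $u$ has a continuous representative on $(0,\infty)$ and the pointwise bound passes to the limit.

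There is no deep obstacle here; the two points requiring care are (a) the choice $q=\tfrac{p+2}{2}$, which is exactly what makes the power of $|u|$ under the integral equal to $p$ and makes the argument use the full admissible range $2\le p<\infty$ (it is also where $q\ge1$, needed for $|u|^q$ to be $C^1$ along $u$, comes from), and (b) the density/continuity step that justifies the vanishing boundary term at $s=\infty$ and gives a pointwise meaning to the inequality for a general $H^1_{\mathrm{rad}}$ function.
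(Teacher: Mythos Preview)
Your proof is correct and follows essentially the same approach as the paper: reduce by density to smooth compactly supported radial functions, apply the fundamental theorem of calculus to $|u(s)|^{(p+2)/2}$, insert the factor $s/r\ge 1$, use Cauchy--Schwarz with respect to the measure $s\,ds$, and conclude with the Sobolev embedding $H^1(\R^2)\hookrightarrow L^p(\R^2)$. Your write-up is in fact more careful than the paper's (you correctly track $|u|$ rather than $u$ and give a clean limiting argument), but the strategy is identical.
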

\begin{proof}
By density, it suffices to consider smooth compactly supported functions. Let $u(x)=\psi(| x|)$, $\psi \in \mathcal{D}([0,\infty[)$. We have
$$
\psi^{\frac{p+2}{2}}(r)=-\frac{p+2}{2} \int_{0}^{+\infty} \psi'(s) \psi(r)^{\frac{p}{2}} \, ds.
$$
Thus
$$
|\psi|^{\frac{p+2}{2}}(r)\leqslant \frac{p+2}{2r} \int_{0}^{+\infty} |\psi'|(s)|\psi(r)|^{\frac{p}{2}} s \, ds\leqslant   \frac{p+2}{2r} \|\nabla u\|_{L^2(\R^2)}\|u\|_{L^p(\R^2)}^{\frac{p}{2}}.
$$
We conclude using the Sobolev embedding $H^1(\R^2)\hookrightarrow L^p(\R^2)$.
\end{proof}

\begin{theorem}[\sf Poincar\'e inequality]\quad\\
Let $\Omega$ be a bounded domain of $\R^d$. Let $1\leqslant p <\infty$. Then, a constant $C$ exists depending only on $\Omega$ and $p$ such that, for all $u \in W^{1,p}_0(\Omega)$, the following inequality holds
$$
\|u\|_{L^p(\Omega)} \leqslant C \|\nabla u\|_{L^p(\Omega)}.
$$
\end{theorem}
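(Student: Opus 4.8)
The plan is to reduce to smooth compactly supported functions and then exploit the boundedness of $\Omega$ in a single coordinate direction via the fundamental theorem of calculus. First I would observe that, since $\Omega$ is bounded, there exist real numbers $a<b$ with $\Omega\subset S:=\{x=(x',x_d)\in\R^{d-1}\times\R:\ a<x_d<b\}$. By the very definition of $W^{1,p}_0(\Omega)$, it suffices to prove the inequality for $u\in C_c^\infty(\Omega)$ and then pass to the limit; I would keep this density step for the end. For such a $u$, extended by zero to all of $S$, one has $u(x',a)=0$ and hence
$$
u(x',x_d)=\int_a^{x_d}\partial_d u(x',s)\,ds .
$$

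Next I would apply H\"older's inequality in the variable $s$ to obtain, for every $x_d\in(a,b)$,
$$
|u(x',x_d)|^p\leqslant (x_d-a)^{p-1}\int_a^{b}|\partial_d u(x',s)|^p\,ds\leqslant (b-a)^{p-1}\int_a^{b}|\partial_d u(x',s)|^p\,ds ,
$$
where for $p=1$ the factor $(x_d-a)^{p-1}$ is simply $1$. Integrating this in $x_d$ over $(a,b)$, then in $x'$ over $\R^{d-1}$, and using Fubini together with $|\partial_d u|\leqslant|\nabla u|$, I would get
$$
\|u\|_{L^p(\Omega)}^p=\|u\|_{L^p(S)}^p\leqslant (b-a)^p\,\|\partial_d u\|_{L^p(S)}^p\leqslant (b-a)^p\,\|\nabla u\|_{L^p(\Omega)}^p ,
$$
which is the asserted inequality with $C=b-a$ for smooth compactly supported $u$. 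Finally I would treat a general $u\in W^{1,p}_0(\Omega)$ by choosing $u_n\in C_c^\infty(\Omega)$ with $u_n\to u$ in $W^{1,p}(\Omega)$; the estimate holds for each $u_n$ with the same constant $b-a$, and letting $n\to\infty$ (so that $u_n\to u$ in $L^p$ and $\nabla u_n\to\nabla u$ in $L^p$) yields the inequality for $u$.

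There is essentially no serious obstacle here, this being a classical fact; the only points requiring a modicum of care are the legitimacy of H\"older's inequality at the endpoint $p=1$, and the remark that the constant produced is the width of a slab enclosing $\Omega$, hence depends only on $\Omega$ (in fact only on its diameter) and trivially on $p$, as claimed. One could alternatively invoke the compact embedding $W^{1,p}_0(\Omega)\hookrightarrow\hookrightarrow L^p(\Omega)$ and a contradiction/compactness argument, but the direct slab argument above is shorter and gives an explicit constant.
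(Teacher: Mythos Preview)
Your proof is correct and is the standard slab argument for the Poincar\'e inequality. Note, however, that the paper does not actually prove this theorem: it is merely stated in the appendix as a classical fact (alongside the Hardy--Littlewood and Polya--Szeg\"o theorems, for which the reader is referred to \cite{Kesaven}, and the $W^{1,p}_0$ characterization, for which the reader is referred to \cite{Brezis}). So there is no ``paper's own proof'' to compare against; your argument simply supplies what the paper omits, and does so in the most direct and explicit way.
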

The Poincar\'e inequality implies the following.
\begin{corollary}
The norms $\|\cdot\|_{W^{1,p}(\Omega)}$ and $\|\nabla \cdot\|_{L^p(\Omega)}$ are equivalent.
\end{corollary}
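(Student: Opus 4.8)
The plan is to deduce the equivalence directly from the Poincar\'e inequality just stated, working on the space $W^{1,p}_0(\Omega)$ (the natural domain on which $\|\nabla\cdot\|_{L^p(\Omega)}$ is genuinely a norm), and with the usual convention
$$
\|u\|_{W^{1,p}(\Omega)}=\|u\|_{L^p(\Omega)}+\|\nabla u\|_{L^p(\Omega)} .
$$
What has to be produced is a two-sided bound between $\|\cdot\|_{W^{1,p}(\Omega)}$ and the seminorm $\|\nabla\cdot\|_{L^p(\Omega)}$.

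First I would record the trivial inequality: for every $u\in W^{1,p}_0(\Omega)$,
$$
\|\nabla u\|_{L^p(\Omega)}\leqslant \|u\|_{L^p(\Omega)}+\|\nabla u\|_{L^p(\Omega)}=\|u\|_{W^{1,p}(\Omega)} ,
$$
which needs nothing but the definition and gives one half of the equivalence with constant $1$. For the reverse estimate I would invoke the Poincar\'e inequality: there is a constant $C=C(\Omega,p)$ with $\|u\|_{L^p(\Omega)}\leqslant C\,\|\nabla u\|_{L^p(\Omega)}$ for all $u\in W^{1,p}_0(\Omega)$. Adding $\|\nabla u\|_{L^p(\Omega)}$ to both sides yields
$$
\|u\|_{W^{1,p}(\Omega)}=\|u\|_{L^p(\Omega)}+\|\nabla u\|_{L^p(\Omega)}\leqslant (C+1)\,\|\nabla u\|_{L^p(\Omega)} .
$$
Combining the two displays gives $\|\nabla u\|_{L^p(\Omega)}\leqslant\|u\|_{W^{1,p}(\Omega)}\leqslant (C+1)\|\nabla u\|_{L^p(\Omega)}$, i.e. the claimed equivalence.

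There is no real obstacle here; the points deserving a word of care are, first, that the statement is meant on $W^{1,p}_0(\Omega)$ rather than on all of $W^{1,p}(\Omega)$ --- on the latter $\|\nabla\cdot\|_{L^p(\Omega)}$ vanishes on constants and is not a norm --- and, second, that the argument is insensitive to the precise convention chosen for $\|\cdot\|_{W^{1,p}(\Omega)}$: passing between $\|u\|_{L^p}+\|\nabla u\|_{L^p}$ and $(\|u\|_{L^p}^p+\|\nabla u\|_{L^p}^p)^{1/p}$ only changes the constants by a factor depending on $p$ alone, so the equivalence is unaffected.
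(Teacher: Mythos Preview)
Your argument is correct and is exactly the standard derivation the paper has in mind: the corollary is stated immediately after the Poincar\'e inequality with the remark ``The Poincar\'e inequality implies the following'' and no further proof is given, so your two-line deduction from Poincar\'e is precisely the intended one. Your observation that the equivalence is to be understood on $W^{1,p}_0(\Omega)$ (where $\|\nabla\cdot\|_{L^p}$ is a genuine norm) is also the correct reading, consistent with how the paper uses it in the proof of Lemma~\ref{hardy}.
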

\begin{theorem}[\sf A characterization of $W^{1,p}_0(\Omega)$]\quad\\
Suppose $\Omega$, a bounded open set of $\R^d$, is of class $C^1$. Let $u \in L^p(\Omega)$ with $1<p<\infty$. The following properties are equivalent:
\begin{itemize}
\item [$(i)$] $u \in W^{1,p}_0(\Omega)$;
\item [$(ii)$] there exists a constant $C$ such that, for all $\phi \in \mathcal{D}(\R^d)$ and all $j\in \{1,\cdots,d\}$
$$
\bigg|\int_{\Omega} u \, \partial_{x_j}\phi\bigg| \leqslant C \|\phi\|_{L^{p'}(\Omega)}$$,\\
\item [$(iii)$] the function
\[ \bar{u}(x) := \left\{
\begin{array}{l l}
  u(x) & \quad \text{if} \quad x \in \Omega,\\
  0 & \quad \text{if} \quad x \in \R^d \backslash \Omega,\\ \end{array} \right. \]
belongs to $W^{1,p}(\R^d)$.

\end{itemize}
\end{theorem}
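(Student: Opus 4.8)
I would organize the proof around the fact that the equivalence $(ii)\Leftrightarrow(iii)$ and the implication $(i)\Rightarrow(iii)$ are elementary, whereas the whole force of the statement is concentrated in $(iii)\Rightarrow(i)$, the only place where the $C^1$ regularity of $\partial\Omega$ is used. So the plan is to dispatch $(i)\Rightarrow(iii)$, then $(iii)\Rightarrow(ii)$, then $(ii)\Rightarrow(iii)$, and finally $(iii)\Rightarrow(i)$. For $(i)\Rightarrow(iii)$: choose $u_n\in C^\infty_c(\Omega)$ with $u_n\to u$ in $W^{1,p}(\Omega)$; since each $u_n$ is compactly supported in $\Omega$, its extension by zero $\bar u_n$ lies in $\mathcal D(\R^d)$ and $\nabla\bar u_n=\overline{\nabla u_n}$, so $(\bar u_n)$ is Cauchy in $W^{1,p}(\R^d)$ with limit $\bar u$, whence $\bar u\in W^{1,p}(\R^d)$. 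For $(iii)\Rightarrow(ii)$: for $\phi\in\mathcal D(\R^d)$, integration by parts on $\R^d$ gives $\int_\Omega u\,\partial_{x_j}\phi=\int_{\R^d}\bar u\,\partial_{x_j}\phi=-\int_{\R^d}(\partial_{x_j}\bar u)\,\phi$; testing against $\phi$ supported off $\overline\Omega$ shows $\partial_{x_j}\bar u$ vanishes there too, so the right-hand side equals $-\int_\Omega(\partial_{x_j}\bar u)\,\phi$, and Hölder's inequality yields $(ii)$ with $C=\max_j\|\partial_{x_j}\bar u\|_{L^p(\Omega)}$.

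For $(ii)\Rightarrow(iii)$: since $1<p'<\infty$, the restrictions to $\Omega$ of functions in $\mathcal D(\R^d)$ form a dense subspace of $L^{p'}(\Omega)$, so the functional $\phi\mapsto\int_\Omega u\,\partial_{x_j}\phi$ extends to a bounded linear functional on $L^{p'}(\Omega)$; by the Riesz representation theorem there exists $g_j\in L^p(\Omega)$ with $\int_\Omega u\,\partial_{x_j}\phi=\int_\Omega g_j\,\phi$ for all such $\phi$. Testing against $\phi\in\mathcal D(\Omega)$ shows $\partial_{x_j}u=-g_j$, hence $u\in W^{1,p}(\Omega)$; testing against $\phi\in\mathcal D(\R^d)$ shows $\partial_{x_j}\bar u=-\bar g_j\in L^p(\R^d)$, hence $\bar u\in W^{1,p}(\R^d)$. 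This closes $(ii)\Leftrightarrow(iii)$, and together with $(i)\Rightarrow(iii)$ it remains only to prove $(iii)\Rightarrow(i)$ to obtain all the equivalences.

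The substantive step is $(iii)\Rightarrow(i)$. Using that $\partial\Omega$ is $C^1$, cover $\overline\Omega$ by finitely many open sets $U_0\Subset\Omega$ and $U_1,\dots,U_m$, where on each $U_k$ with $k\ge1$ a $C^1$ change of coordinates $\Psi_k$ maps $U_k\cap\Omega$ into the half-space $\{x_d>0\}$ and $U_k\cap\partial\Omega$ into $\{x_d=0\}$; take a subordinate partition of unity $(\theta_k)_{0\le k\le m}$ and write $u=\sum_k\theta_k u$. The piece $\theta_0 u$ is compactly supported in $\Omega$, so ordinary mollification gives approximants in $\mathcal D(\Omega)$. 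For $k\ge1$, transport $\theta_k u$ by $\Psi_k$; the hypothesis that the zero-extension lies in $W^{1,p}$ is preserved under a $C^1$ diffeomorphism, so one is reduced to the model situation of a $W^{1,p}$ function on $\{x_d>0\}$ whose extension by zero across $\{x_d=0\}$ is still $W^{1,p}$. Translating this function by $-\tau e_d$ with $\tau>0$ pushes its support strictly into $\{x_d>0\}$, and then mollifying at a scale much smaller than $\tau$ produces functions in $C^\infty_c(\{x_d>0\})$; as $\tau\to 0$ these converge in $W^{1,p}$ by continuity of translation in $L^p$ and $W^{1,p}$, which is exactly the point where membership in the \emph{global} space $W^{1,p}(\R^d)$, rather than merely $W^{1,p}(\Omega)$, is essential. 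Pulling back by $\Psi_k^{-1}$ and summing over $k$ exhibits $u$ as a $W^{1,p}(\Omega)$-limit of functions in $C^\infty_c(\Omega)$, i.e. $u\in W^{1,p}_0(\Omega)$.

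The main obstacle is precisely this last implication: one must genuinely use the $C^1$ structure of the boundary to flatten it locally, and organize the inward-translation-plus-mollification so that the approximants stay compactly supported in $\Omega$ while converging in $W^{1,p}$ norm; hypothesis $(iii)$ is what rules out counterexamples such as $u\equiv 1$ on a ball, which belongs to $W^{1,p}(\Omega)$ but not to $W^{1,p}_0(\Omega)$ and whose zero-extension is not in $W^{1,p}(\R^d)$. The restriction $1<p<\infty$ is used only for the duality and density arguments in $(ii)\Leftrightarrow(iii)$; the equivalence $(i)\Leftrightarrow(iii)$ itself holds for every $1\le p<\infty$.
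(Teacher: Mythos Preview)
Your proof is correct and follows exactly the standard route (the implications $(i)\Rightarrow(iii)\Rightarrow(ii)\Rightarrow(iii)$ by approximation, integration by parts, and Riesz duality, followed by $(iii)\Rightarrow(i)$ via local charts, inward translation, and mollification). The paper itself does not give a proof at all: its entire argument is the one-line reference ``See \cite{Brezis}'', so your write-up is precisely the classical proof that reference points to, with nothing to compare against.
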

\begin{proof}
See \cite{Brezis}.
\end{proof}

\end{document}